\newcommand*{\DashedArrow}[1][]{\mathbin{\tikz [baseline=-0.25ex,-latex, dashed,#1] \draw [#1] (0pt,
0.5ex) -- (1.3em,0.5ex);}}
\newtheorem{theorem}{Theorem}[section]
\newtheorem{lemma}[theorem]{Lemma}
\theoremstyle{definition}
\newtheorem{definition}[theorem]{Definition}
\theoremstyle{proposition}
\newtheorem{proposition}[theorem]{Proposition}
\newtheorem{remark}[theorem]{Remark}
\numberwithin{equation}{section}
\theoremstyle{main}
\newtheorem{main}{Theorem}
\begin{document}

\title[2-dimensional projective foliations]{ Irreducible components of the space of foliations by surfaces}

%\subjclass[2013]{Primary REQUIRED; Secondary OPTIONAL}

\subjclass{37F75 (primary); 32G34, 32S65 (secondary)}

%foliations by complex surfaces and its deformations
\author[W. Costa e Silva]{W. Costa e Silva}
\address{IRMAR, Universit\'e Rennes 1, Campus de Beaulieu, 35042 Rennes Cedex France}
\curraddr{IRMAR, Universit\'e Rennes 1, Campus de Beaulieu, 35042 Rennes Cedex France}
\email{wancossil@gmail.com}
%\thanks{I am deeply grateful to A. Lins Neto and D. Cerveau for the discussions, suggestions and comments. This work was developed at IRMAR and was supported by CAPES (process number 9814-13-2)}

\date{}

%\dedicatory{To Fabiana}

%    "Communicated by" -- provide editor's name; required.
%\commby{}

%\begin{abstract}
%Let  $\mathbb{F}{\rm{ol}}\left(k;n\right)$ be the space of codimension one holomorphic foliations of degree $k$ on  $\mathbb P^n$. In this paper we prove that, if $n\geq 3$, the set of foliations $\mathcal{F}$ of  $\mathbb P^n$ which can be written as $\mathcal {F}= f^{*}\left(\mathcal{G}\right)$, where $\mathcal{G}$ is a  foliation in $ {\mathbb P^2}$ of degree $d\geq2$ with three invariant lines in general position, say $(XYZ)=0$, and $f:{\mathbb P^n}\DashedArrow[->,densely dashed]{\mathbb P^2}$, $deg\left(f\right)=\nu\geq2,$  $f=\left(F^\alpha_{0}:F^\beta_{1}:F^\gamma_{2}\right)$, $\left(\alpha, \beta,\gamma\right) \in \mathbb N^{3}$ such that $1< \alpha < \beta < \gamma$ is an irreducible component of $\mathbb{F}{\rm{ol}}\left(k,n\right)$, where $\displaystyle{k=\nu\left[\left(d-1\right)+\frac{1}{\alpha}+\frac{1}{\beta}+\frac{1}{\gamma}\right] - 2}$.
%\end{abstract}

\begin{abstract} Let $\mathcal{F}$ be written as $ f^{*}\left(\mathcal{G}\right)$, where $\mathcal{G}$ is a $1$-dimensional foliation on $ {\mathbb P^{n-1}}$ and $f:{\mathbb P^n}\DashedArrow[->,densely dashed]{\mathbb P^{n-1}}$ a non-linear generic rational map.  We use local stability results of singular holomorphic foliations, to prove that: if $n\geq 4$, a foliation $\mathcal{F}$ by complex surfaces on  $\mathbb P^n$ is globally stable under holomorphic deformations. As a consequence, we obtain irreducible components for the space of two-dimensional foliations in  $\mathbb P^n$. We present also a result which characterizes holomorphic foliations on ${\mathbb P^n}, n\geq 4$ which can be obtained as a pull back of 1- foliations in ${\mathbb P^{n-1}}$ of degree $d\geq2$. \end{abstract}
\maketitle

%\tableofcontents

\setcounter{tocdepth}{1}
\tableofcontents \sloppy

\section{Introduction}
A two  singular foliation $\mathcal F$ of a holomorphic manifold $M$, $dim_{\mathbb C}\geq3$, may be defined by:

\begin{enumerate}

\item A covering $\mathcal U = (U_{\alpha})_{{\alpha \in A}}$ of $M$ by open sets.

\item A collection $(\eta_{\alpha})_{{\alpha \in A}}$ of integrable $(n-2)$-forms, $\eta _{\alpha} \in \Omega^{n-2}(U_{\alpha})$, where $\eta_{\alpha}\not\equiv0$ and defines a $2$-dimensional foliation in $U_{\alpha}$.

\item A multiplicative cocycle $G:=(g_{\alpha\beta})_{U_{\alpha}\cap U_{\beta}\neq0}$ such that $\eta_{\alpha}=g_{\alpha\beta}\eta_{\beta}$.
\end{enumerate}
If $N_{\mathcal F}$ denotes the holomorphic line bundle represented by the cocycle $G$, the family $(\eta_{\alpha})_{{\alpha \in A}}$, defines a holomorphic section of the vector bundle $\Omega^{n-2}(M)\otimes N_{\mathcal F}$ i.e. an element $\eta$ of the cohomology vector space $H^0(M,\Omega^{n-2}(M)\otimes N_{\mathcal F})$. The analytic subset $Sing(\eta):=\{p\in {M}|\eta(p)=0\}$ is the singular set of ${\mathcal F}$. In the case of $M=\mathbb P^{n}$, the $n$-dimensional complex projective space, we have a theorem of Chow-type. Denote by $\pi:\mathbb C^{n+1}\backslash \{0\}\to \mathbb P^{n}$ the natural projection, and consider $\pi^\ast\mathcal F$ of the foliation $\mathcal F$ by $\pi$; with the previous notations, $\pi^\ast\mathcal F$ is defined by $(n-2)$-forms, $\pi^\ast\eta _{\alpha} \in \Omega^{n-2}[\pi^{-1}(U_{\alpha})]$. Recall that for $n\geq2$ we have $H^{1}(\mathbb C^{n+1}\backslash \{0\},\mathcal O^{\ast})=\{1\}$: it is a result from Cartan. As a consequence, there exists a global holomorphic $(n-2)$-form $\eta$ on $\mathbb C^{n+1}\backslash \{0\}$ which defines $\pi^\ast\mathcal F$ on $\mathbb C^{n+1}\backslash \{0\}$. By Hartog's extension theorem, $\eta$ can be extended holomorphicaly at $0$. By construction we have $i_R\eta=0$, where $R$ is the radial vector field. This fact and the integrability condition imply that each coefficient of $\eta$ is a homogeneous polynomial of degree $deg(\mathcal F)+1$. Moreover, if we take  a section by a generic immersion of hyperplane $H:=(i:\mathbb P^{n-1}\to \mathbb P^{n})$, this procedure gives a foliation by curves $i^{\ast}(\mathcal F)$ on $\mathbb P^{n-1}$. We then define the degree of  $\mathcal F$, for short $deg(\mathcal F)$, as the degree of a generic section as before. From now on we will always assume that the singular set of $\mathcal F$ has codimension greater or equal than two. The projectvisation of the set of $n-2$-forms which satisfy the previous conditions will be denoted by $\mathbb{F}{\rm{ol}}\left(d;2,n\right)$, the space of $2-$dimensional foliations on $\mathbb P^{n}$ of degree $d$. Note that $\mathbb{F}{\rm{ol}}\left(d;2,n\right)$ can be considered as a quasi projective algebraic subset of $\mathbb PH^{0}(\mathbb P^{n},\Omega^{n-2}(\mathbb P^{n})\otimes \mathcal O_{\mathbb P^n}(d+n-1))$. In this scenario we have the following:

%\medskip
%Geometrically, the degree of $\mathcal F$ is the number of tangencies between the foliation $i^{\ast}(\mathcal F)$ with a generic $(n-2)$-plane contained in $\mathbb P^{n-1}$. We call this number of tangencies the degree of $\mathcal F$.

\vskip0.2cm
\noindent \textbf{{\underline{Problem}: \emph{Describe and classify the irreducible components of \break $\mathbb{F}{\rm{ol}}\left(d;2,n\right)$ on ${\mathbb P^n}$, such that $n\geq 3.$}}}
\vskip0.2cm
 %\noindent The study of irreducible components of these spaces has been initiated by Jouanolou in \cite{jou}, where  the irreducible components of $\Fol^1(\mathbb P^n, \mathcal O_{\mathbb P^n}(2))$ (codimension one foliations of degree zero) and $\Fol^1(\mathbb P^n, \mathcal O_{\mathbb P^n}(3))$ (codimension one foliations of degree one) are described. The irreducible components of $\Fol^1(\mathbb P^n, \mathcal O_{\mathbb P^n}(4))$, $n\ge 3$, have been classified by Cerveau and Lins Neto in \cite{cln}.
%For the case $q\geq2$ in the description of the irreducible components of $\mathbb{F}{\rm{ol}}\left(d;p,n\right)$ we have a few known results: rational components \cite{cupeva}, foliations associated to affine Lie algebras, foliations induced by group actions, linear pull-backs \cite{cupe}.
 We observe that the classification of the irreducible components of $\mathbb{F}{\rm{ol}}\left(0;2,n\right)$ is given in \cite[Th. 3.8 p. 46]{cede} and that the classification of the irreducible components of $\mathbb{F}{\rm{ol}}\left(1;2,n\right)$ is given in \cite[Th. 6.2 and Cor. 6.3 p. 935-936]{lpt}. 
 We refer the reader to \cite{cede} and \cite{lpt} and references therein for a detailed description of them. 
In the case of foliations of codimension $1$, the definitions of foliation and degree are analogous and we denote by $\mathbb{F}{\rm{ol}}\left(k,n\right)$ the space of codimension $1$ foliations of degree $k$ on ${\mathbb P^n}$, such that $n\geq 3$. The study of irreducible components of these spaces has been initiated by Jouanolou in \cite{jou}, where  the irreducible components of  $\mathbb{F}{\rm{ol}}\left(k,n\right)$ for $k=0$ and $k=1$ are described. In the case of codimension one foliations  one can exhibit some kind of list of irreducible components in every degree, but this list is incomplete.  In the  paper \cite{cln}, the authors proved that $\mathbb{F}{\rm{ol}}\left(2,n\right)$ has six irreducible components, which can be described by geometric and dynamic properties of a generic element. We refer the reader to \cite{cln} and \cite{ln}  for a detailed description of them.
 There are known families of irreducible components in which the typical element is a pull-back of a foliation on ${\mathbb P^2}$ by a rational map. Given a generic rational map $f: {\mathbb P^n}  \DashedArrow[->,densely dashed    ]   {\mathbb P^2}$ of degree $\nu\geq1$, it can be written in homogeneous coordinates as $f=(F_0,F_1,F_2)$ where $F_0,F_1$ and $F_2$ are homogeneous polynomials of degree $\nu$. Now consider a foliation $\mathcal G$ on ${\mathbb P^2}$ of degree $d\geq2.$ We can associate to the pair $(f,\mathcal G)$ the pull-back foliation $\mathcal F=f^{\ast}\mathcal G$.
The degree of the foliation $\mathcal F$ is $\nu(d+2)-2$ as proved in \cite{clne}.
Denote by $PB(d,\nu;n)$ the closure in $\mathbb{F}{\rm{ol}}\left(\nu(d+2)-2,n\right)$, $n \geq 3$ of the set of foliations $\mathcal F$ of the form $f^{\ast}\mathcal G$. Since $(f,\mathcal G)\to f^{\ast}\mathcal G$ is an algebraic parametrization of $PB(d,\nu;n)$ it follows that $PB(d,\nu;n)$ is an unirational irreducible algebraic subset of $\mathbb{F}{\rm{ol}}\left(\nu(d+2)-2,n\right)$, $n \geq 3$. We have the following result:
\begin{theorem}\label{teo1.1}      
  $PB(d,\nu;n)$ is a unirational irreducible component of\break $\mathbb{F}{\rm{ol}}\left(\nu(d+2)-2,n\right);$ $n \geq 3$, $\nu\geq1$  and $d \geq 2$.
\end{theorem}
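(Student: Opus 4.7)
The set $PB(d,\nu;n)$ is unirational and irreducible by construction, being the closure of the image of the algebraic morphism $(f,\mathcal{G}) \mapsto f^{\ast}\mathcal{G}$. To upgrade this to ``irreducible component'', the plan is to establish a stability statement: for a generic $(f,\mathcal{G})$ and every holomorphic deformation $\{\mathcal{F}_t\}_{t \in (\mathbb{C},0)}$ of $\mathcal{F}_0 = f^{\ast}\mathcal{G}$ inside $\mathbb{F}{\rm ol}(\nu(d+2)-2,n)$, one should have $\mathcal{F}_t \in PB(d,\nu;n)$ for all $t$ near $0$. The first step is to analyze $\mathrm{Sing}(\mathcal{F}_0)$. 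For a generic choice of $(f,\mathcal{G})$, it contains the indeterminacy locus $I(f) = V(F_0,F_1,F_2)$ (of codimension three), together with the closures $\Sigma_p = \overline{f^{-1}(p)}$ for each $p \in \mathrm{Sing}(\mathcal{G})$, each a codim-two complete intersection of degree-$\nu$ hypersurfaces containing $I(f)$. Off $I(f)$, the foliation $\mathcal{F}_0$ is locally a trivial extension of the germ of $\mathcal{G}$ at $p$, so $\Sigma_p \setminus I(f)$ is a Kupka-type component of $\mathrm{Sing}(\mathcal{F}_0)$.

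The second step is to invoke the stability of Kupka singularities under holomorphic deformation: each Kupka component $\Sigma_p$ persists as a holomorphic family $\Sigma_p(t) \subset \mathbb{P}^n$ of codim-two analytic subsets inside $\mathrm{Sing}(\mathcal{F}_t)$, each carrying a deformed transversal germ close to that of $\mathcal{G}$ at $p$. The heart of the argument is then \emph{globalization}: one needs to show that the family $\{\Sigma_p(t)\}_p$ is cut out by a \emph{single} degree-$\nu$ pencil, with a common codim-three base locus $I(f_t)$, thereby defining a deformed rational map $f_t : \mathbb{P}^n \DashedArrow[->,densely dashed] \mathbb{P}^2$ of degree $\nu$. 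This step relies on (i) a rigidity statement forcing each $\Sigma_p(t)$ to remain a complete intersection of degree-$\nu$ hypersurfaces in $\mathbb{P}^n$, $n \geq 3$, and (ii) the hypothesis $d \geq 2$, which ensures that $\mathcal{G}$ has enough singular points to pin down $f_t$ uniquely up to automorphism of $\mathbb{P}^2$.

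Once $f_t$ has been constructed, the tangency of $\mathcal{F}_t$ to its fibers follows from Kupka stability along each $\Sigma_p(t)$ together with continuity, so $\mathcal{F}_t$ descends via $f_t$ to a foliation $\mathcal{G}_t$ on $\mathbb{P}^2$ of degree $d$, yielding $\mathcal{F}_t = f_t^{\ast}\mathcal{G}_t \in PB(d,\nu;n)$. The main obstacle is precisely this globalization step: passing from local deformations $\Sigma_p(t)$ of individual Kupka components to the assertion that they are fibers of a single deformed rational map. This is a genuinely non-local rigidity statement, amounting to the fact that the degree-$\nu$ pencil defining the original $f$ deforms as a pencil, not merely factor-by-factor, and it is the point at which the non-degeneracy of $f$ and the genericity of $\mathcal{G}$ become indispensable, in line with the pull-back component theorems in \cite{cln} and \cite{ln}.
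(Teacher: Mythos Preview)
The paper does not prove Theorem~\ref{teo1.1}; it merely states it as a known result and immediately attributes the case $\nu=1$ to \cite{caln} and the case $\nu>1$ to \cite{clne}. There is no argument in the present paper to compare your proposal against.

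That said, your outline is an accurate high-level summary of the strategy carried out in \cite{clne}: irreducibility and unirationality from the parametrization, stability of generic pull-backs under deformation via the Kupka components $\Sigma_p\setminus I(f)$, persistence of these components as complete intersections of degree-$\nu$ hypersurfaces, reconstruction of a deformed map $f_t$ from the incidence of the deformed $\Sigma_p(t)$'s (using that $d\geq 2$ gives enough singular fibers), and finally descent of $\mathcal{F}_t$ along $f_t$ to a foliation $\mathcal{G}_t$ on $\mathbb{P}^2$. So as a roadmap your proposal is correct and aligned with the literature.

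Where your write-up falls short of a proof is exactly where you yourself flag the obstacle: the globalization step. You assert that the $\Sigma_p(t)$ remain degree-$\nu$ complete intersections and that they are cut out by a \emph{common} three-dimensional linear system, but you do not supply the mechanism. In \cite{clne} (and analogously in the present paper's proof of Theorem~A for the $2$-dimensional case) this is done concretely: one uses the local normal form at the points of $I(f)$ to force $I(f_t)=\bigcap_p \Sigma_p(t)$ to persist as a set of $\nu^n$ points (in the codimension-one setting, $\nu^3$), and then a Noether-type lemma to show that the defining equations of any $\Sigma_p(t)$ lie in the ideal generated by those of a fixed pair $\Sigma_{p_1}(t),\Sigma_{p_2}(t)$. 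Your sketch names ``rigidity'' and ``genericity'' but does not invoke either of these two ingredients; without them the passage from local Kupka stability to a global map $f_t$ is not justified. Similarly, your final step (``$\mathcal{F}_t$ descends via $f_t$'') is asserted by continuity, but the actual argument requires producing a common non-algebraic invariant hypersurface for $\mathcal{F}_t$ and $f_t^{\ast}\mathcal{G}_t$ and then comparing tangency loci, as in Lemma~\ref{lemafund} of the present paper.
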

The case $\nu=1$, of linear pull-backs, was proven in \cite{caln}, whereas the case $\nu>1$, of nonlinear pull-backs, was proved in \cite{clne}. The search for new components of pull-back type for the space of codimension 1 foliations was considered in the Ph.D thesis of the author \cite{cs} and after in \cite{cs1}. There we investigated branched rational maps and foliations with algebraic invariant sets of positive dimensions. 
 
Recently, A.Lins Neto in \cite{ln3} generalized the results contained in \cite{ln0} about  singularities of integrable $1$-forms for the $2$-dimensional case and he has obtained as a corollary components of linear pull-back type for the case of $2$-dimensional foliations on ${\mathbb P^n}$. In the present work we will explore the result contained in \cite{ln3} and some ideas contained in \cite{clne} to show that, in fact, there exist families of irreducible components of non-linear pull-back type for the $2$-dimensional case. We would like to mention that in \cite{cupe} the authors have shown that linear pull-back components exist in all codimension. However, the techniques that they use to prove this fact can not be applied to the non-linear case. 
%\subsubsection{Pull-back Components}  
 
 \subsection{The present work}
Let us describe, briefly, the type of pull-back foliation that we shall consider.\\ 
 Let us fix some homogeneous coordinates $Z=(z_0,...,z_{n})$ on $\mathbb C^{n+1}$ and $X=(x_0,...,x_{n-1})$ on $\mathbb C^{n}$. Let  $f: {\mathbb P^n}  \DashedArrow[->,densely dashed    ]   {\mathbb P^{n-1}}$ be a rational map represented in the coordinates $Z\in \mathbb{C}^{n+1}$ and $X \in \mathbb C^{n}$ by $\tilde f=(F_{0},F_{1},...,F_{n-1})$ where $F_{i} \in \mathbb C[X]$ are homogeneous polynomials without common factors of degree $\nu$. 
 Let $\mathcal G$ be a foliation by curves on $\mathbb P^{n-1}$. This foliation can be represented in these coordinates by a homogenous polynomial $(n-2)$-form of the type  $$\Omega=(-1)^{i+k+1}\sum_{i,k} x_{k}P_{i}dx_0\wedge...\wedge \widehat{dx_i}\wedge...\wedge\widehat{dx_k}\wedge...\wedge dx_{n-1}$$ for all $i,k \in \{0,...,n-1\}$ where each $P_i$ is a homogeneous polynomial of degree $d$. The pull back foliation $f^{*}(\mathcal G)$ is then defined in homogeneous coordinates by the $(n-2)$-form $$\tilde\eta_{[f,\mathcal G]}\left(Z\right)=\left[
 (-1)^{i+k+1}\sum_{i,k} F_{k}(P_{i}\circ\tilde{f})\space dF_0\wedge...\wedge \widehat{dF_i}\wedge...\wedge\widehat{dF_k}\wedge...\wedge dF_{n-1}\right],$$ $i,k \in \{0,...,n-1\}$ where each coefficient of  $\tilde\eta_{[f,\mathcal G]}\left(W\right)$ has degree \break $\Theta(\nu,d,n)+1=[(d+n-1)\nu-2].$ 
Let $PB(\Theta,2,n)$ be the closure in $\mathbb{F}{\rm{ol}}\left(\Theta;2,n\right)$  of the set $\left\{ \left[\tilde\eta_{[f,\mathcal G]} \right] \right\}$, where $\tilde\eta_{[f,\mathcal G]}$ is as before. The pull-back foliation's degree is\break $\Theta(\nu,d,n)=[(d+n-1)\nu-3]$ and for simplicity we will denote it by $\Theta$. Let us state the main result of this work.
\begin{main}\label{teob}$PB(\Theta;2,n)$ is a unirational irreducible component of $\mathbb{F}{\rm{ol}}\left(\Theta;2,n\right)$ for all $n \geq 4$, $\nu\geq2$ and $d \geq 2$. 
\end{main}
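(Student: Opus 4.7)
The plan is to split Theorem B into two assertions. Unirationality and irreducibility of $PB(\Theta;2,n)$ are immediate from the algebraic parametrization $(f,\G)\mapsto f^*\G$ discussed in the introduction, so the substance of the theorem is to prove that $PB(\Theta;2,n)$ is not properly contained in any larger irreducible subvariety of $\Fol(\Theta;2,n)$. This is equivalent to a global stability statement: for a sufficiently generic $\F_0=f^*\G\in PB(\Theta;2,n)$ and every germ of holomorphic family $\{\F_t\}_{t\in(\C,0)}\subset \Fol(\Theta;2,n)$ with $\F_0$ as center, one must show that $\F_t$ remains of pull-back type, i.e. $\F_t=f_t^*\G_t$ for some holomorphic family of pairs $(f_t,\G_t)$ close to $(f,\G)$.

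To produce such a family I would first analyze the singular set of a generic $\F_0$. For $n\geq 4$ and $\nu\geq 2$ it decomposes into two strata: the indeterminacy locus $I(f)\subset \P^n$, of codimension $2$, along which $f$ behaves locally like a generic non-linear map and contributes singularities of Kupka/product type; and the preimage $f^{-1}(\sing(\G))\setminus I(f)$, where $\F_0$ is a submersive pull-back of a $1$-dimensional foliation with a generic (say, non-degenerate) singularity on $\P^{n-1}$. The second step is to apply Lins~Neto's local stability theorem from \cite{ln3} (the $2$-dimensional analogue of the results invoked in \cite{clne} in the codimension-$1$ setting). This yields, at every generic singular point $p$, a germ of holomorphic map $\tilde f_{t,p}\colon(\P^n,p)\DashedArrow[->,densely dashed] (\P^{n-1},f(p))$ and a local foliation by curves $\tilde\G_{t,p}$ such that $\F_t=\tilde f_{t,p}^*\tilde\G_{t,p}$ on a neighbourhood of $p$, varying holomorphically with $t$.

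The main obstacle is the globalization step: the local pull-back germs must be assembled into a single rational map $f_t\colon\P^n\DashedArrow[->,densely dashed]\P^{n-1}$. Following the strategy of \cite{clne}, the key observation is that the $(n-2)$-dimensional fibres of $f$ are algebraic leaves of $\F_0$, and the local stability forces them to deform inside leaves of $\F_t$ to an $(n-2)$-dimensional algebraic family of subvarieties of the same degree and intersection numbers against generic linear subspaces. A Chow-variety or Hilbert-scheme argument, combined with Hartogs-type extension across the codimension-$2$ set $I(f)$, then produces $f_t$, and $\G_t$ is recovered as the induced quotient foliation on $\P^{n-1}$. The hypotheses $n\geq 4$ and $\nu\geq 2$ are essential here: they guarantee that $I(f)$ has the correct codimension for the local classification of \cite{ln3} to apply and for the extension arguments to run. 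Once $\F_t=f_t^*\G_t$ is obtained for every deformation, $PB(\Theta;2,n)$ contains a full analytic neighbourhood of the generic $\F_0$ in $\Fol(\Theta;2,n)$, whence it is an irreducible component.
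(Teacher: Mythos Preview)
Your overall strategy---reduce to a stability statement for a generic $\F_0=f_0^*\G_0$, analyze the singular locus, invoke the local stability results of \cite{ln3}, then globalize---is indeed the paper's strategy. However, the concrete picture you draw of the singular set is wrong in a way that makes your globalization step inapplicable.

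The map $f:\P^n\DashedArrow[->,densely dashed]\P^{n-1}$ has $n$ components $F_0,\dots,F_{n-1}$; for $f$ generic the indeterminacy locus $I(f)=\{F_0=\dots=F_{n-1}=0\}$ is a finite set of $\nu^n$ points, i.e.\ it has codimension $n$, not codimension $2$. Likewise the fibres of $f$ are \emph{curves}, not $(n-2)$--dimensional subvarieties, and they are not leaves of $\F_0$: the distinguished ones are the curves $V_{q_i}=\overline{f^{-1}(q_i)}$ over the singularities $q_i\in\sing(\G_0)$, and these sit in the \emph{singular set} of $\F_0$. The stratification is thus the reverse of what you wrote: along $V_{q_i}\setminus I(f)$ one has ordinary Kupka (product) structure, while at each point of $I(f)$ the germ of $\F_0$ is a nilpotent generalized Kupka (n.g.K) singularity of type $(1,\dots,1;n)$, locally equivalent to the cone of $\G_0$ on $\P^{n-1}$. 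Because of this, a ``Hartogs extension across the codimension--$2$ set $I(f)$'' is not available, and deforming $(n-2)$--dimensional algebraic leaves inside a Chow or Hilbert scheme is not the relevant mechanism.

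What the paper actually does for the globalization is: (i) use the n.g.K stability theorem of \cite{ln3} to deform each point $p_j\in I(f_0)$ to $p_j(t)$ and the Kupka stability to deform each curve $V_{q_i}$ to a smooth curve $V_{q_i}(t)$ through all the $p_j(t)$; (ii) invoke \cite{Ser0} to see that each $V_{q_i}(t)$ is still a global complete intersection of $(n-1)$ hypersurfaces of degree $\nu$, and then apply a Noether--type lemma to conclude that all these hypersurfaces lie in the ideal generated by a single $n$--tuple $(F_0(t),\dots,F_{n-1}(t))$, yielding the deformed map $f_t$ with $I(f_t)=\{p_j(t)\}$; (iii) blow up $\P^n$ at the points $p_j(t)$ and read off $\G_t$ as the foliation induced by the strict transform of $\F_t$ on one exceptional divisor $E_1(t)\simeq\P^{n-1}$; (iv) compare $\F_t$ with $f_t^*\G_t$ by showing, via a maximum--principle argument along a fibre, that they share a common invariant (non--algebraic) hypersurface coming from a separatrix of $\G_t$, hence the tangency locus is all of $\P^n$ and $\F_t=f_t^*\G_t$. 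Step (iv) is essential and entirely absent from your sketch: constructing $f_t$ and $\G_t$ is not enough, one must still prove that the deformed foliation equals the candidate pull-back.
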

It is worth pointing out that the case $n=3$ is also true and it is contained in theorem \ref{teo1.1}. So we can think this result as the $n\geq4$-dimensional generalization of \cite{clne} for $2$-dimensioal foliations in $\mathbb P^{n}$

\subsubsection*{Acknowledgments:} I am deeply grateful to D. Cerveau for the discussions, suggestions and comments. I would also like to thank E. Goulart for the corrections in the manuscript. This work was developed at IRMAR(Rennes, France) and was supported by Capes-Brasil, process number (9814-13-2).

\section{$1$-dimensional foliations on $\mathbb P^{n-1}$}

\subsection{Basic facts} 
We recall the basic definitions and properties of foliations by curves on $\mathbb P^{n-1}$ that we will use in this work. Proofs and details can be found in \cite{lnsoares} and \cite{ln3}. \\
\noindent Let  $R=\sum_{i=0}^{n-1}x_i\frac{\partial}{\partial x_i}$ be the radial vetor field in $\mathbb C^n$. Denote by $\Sigma(R,d-1)=\{\mathcal Z|[R,\mathcal Z]=(d-1)\mathcal Z\}$, where $[R,\mathcal Z]$ stands for the Lie's bracket  between the two vector fields $R$ and $\mathcal Z$. Observe that  $\Sigma(R,d-1)$ is a finite dimensional vector space whose elements are homogeneous polynomials of degree $d$. Let us write $\mathcal X=(\mathcal X_0,\dots,\mathcal X_n)$ and $\nabla\mathcal X=\sum_{i=0}^{n-1}\frac{\partial\mathcal X_i}{\partial x_i}$. Let $\mathcal E(R,d-1)=\{\mathcal X \in\Sigma(R,d-1)|\nabla\mathcal X=0\}$, and $\mathcal K(R,d-1)=\{\mathcal X\in\mathcal E (R,d-1)|\mathcal X$ has an isolated singularity at $0\in \mathbb C^n\}$. It can be verified that $\mathcal K(R,d-1)$ is a Zariski open and dense subset of $\Sigma(R,d-1)$ and for each $\mathcal X\in$ $\mathcal K(R,d-1)$ then the $(n-2)$-form $$\Omega= i_Ri_{\mathcal X}d\sigma=(-1)^{i+k+1}\sum_{i,k} x_{k}P_{i}dx_0\wedge...\wedge \widehat{dx_i}\wedge...\wedge\widehat{dx_k}\wedge...\wedge dx_{n-1},$$ where $d\sigma=dx_0\wedge...\wedge {dx_i}\wedge...\wedge{dx_k}\wedge...\wedge dx_{n-1}$ is the volume form in $\mathbb C^n$, $\mathcal X=\sum_{i}P_i\frac{\partial}{\partial x_i}$ and $0 \in Sing(\Omega)$ is a n.g.K singularity, with rotational $(d+n)\mathcal X$ (see section \ref{section5.2}) and \cite{ln3} for more details.  Observe that if cod $Sing(\Omega)\geq2$ then $\Omega$ defines  a 1-dimensional foliation $\mathcal G$ on $\mathbb P^{n-1}$ of degree $d$.

\begin{definition} Let us denote by $\mathbb{F}{\rm{ol}}\left(d;1,n-1\right)$ the set of $1$-dimensional foliations on $\mathbb P^{n-1}$.  
\end{definition}
\begin{theorem} \label{teo2,1}\cite{lnsoares} Given, $n\geq3$, and $d \geq2$ there exists a Zariski open subset $\mathcal M(d)$ of $\mathbb{F}{\rm{ol}}\left(d;1,n-1\right)$ such that any $\mathcal G$ satisfies:
\begin{enumerate}
\item $\mathcal G$ has exactly $N=\frac{d^{n}-1}{d-1}$ hyperbolic singularities and is regular on the complement.
\item $\mathcal G$ has no invariant algebraic curve.
\end{enumerate}
\end{theorem}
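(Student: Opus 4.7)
The plan is to identify $\mathbb{F}\mathrm{ol}(d;1,n-1)$ with the projectivization of the vector space $\mathcal{E}(R,d-1)$ of polynomial vector fields $\mathcal{X}$ of degree $d$ with $\nabla\mathcal{X}=0$, modulo the radial direction. Each such $\mathcal{X}$ descends to a global section of $T\P^{n-1}\otimes\mathcal{O}(d-1)$, whose zero locus is exactly the singular set of the associated foliation $\mathcal{G}$ on $\P^{n-1}$.

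For part (1), a Chern-class computation from the Euler sequence gives
$c_{n-1}(T\P^{n-1}\otimes\mathcal{O}(d-1))=\sum_{j=0}^{n-1}\binom{n}{j}(d-1)^{n-1-j}=\frac{d^{n}-1}{d-1}=N$,
which is the generic zero count. ``All $N$ zeros are isolated and simple'' is a Zariski open condition (non-vanishing of a determinant on the parameter space), and so is ``the linear part at each zero is hyperbolic'' (non-zero eigenvalues, pairwise distinct moduli, no resonances). Non-emptiness I would establish by exhibiting an explicit Jouanolou-type perturbation whose $N$ singularities and eigenvalues can be computed directly. Call the resulting Zariski open subset $\mathcal{M}_{1}(d)$.

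For part (2), I would use an incidence-variety approach: for each $k\geq 1$, consider the set $I_{k}\subset \mathbb{F}\mathrm{ol}(d;1,n-1)\times \Hilb_{k}(\P^{n-1})$ of pairs $(\mathcal{G},C)$ with $C$ an irreducible curve of degree $k$ invariant under $\mathcal{G}$. This is Zariski closed, and a parameter/dimension count shows that its projection $Z_{k}$ to the foliation space is a proper constructible subvariety. To replace the countable union $\bigcup_{k}Z_{k}$ by a finite one, and thereby obtain a single Zariski open complement, one invokes a Poincar\'e/Carnicer-type bound: a foliation of degree $d$ on $\P^{n-1}$ whose singularities are all hyperbolic admits invariant irreducible algebraic curves only up to some explicit degree $k_{0}(d,n)$. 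Then $\mathcal{M}(d):=\mathcal{M}_{1}(d)\setminus \bigcup_{k\leq k_{0}}Z_{k}$ is the desired Zariski open subset.

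The main obstacle is the degree bound for invariant algebraic curves. In the surface case ($n-1=2$) this is Carnicer's theorem; in higher ambient dimension one would read it off from the Baum--Bott residues of $\mathcal{G}$ along a putative invariant curve $C$, using that hyperbolicity of the singularities of $\mathcal{G}$ lying on $C$ forces their individual residue contributions to be uniformly controlled. Without such a bound the argument only produces a countable intersection of Zariski opens, which is insufficient for a single Zariski open $\mathcal{M}(d)$ as stated.
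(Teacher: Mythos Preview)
The paper does not supply its own proof of this theorem: it is quoted verbatim from Lins Neto--Soares \cite{lnsoares}, and the text moves on immediately after the statement. So there is no in-paper argument to compare your sketch against; the relevant comparison is with the original proof in \cite{lnsoares}.

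Your treatment of part (1) is essentially the standard one and matches what is done in the literature: the Chern-class count $c_{n-1}(T\mathbb{P}^{n-1}\otimes\mathcal{O}(d-1))=(d^{n}-1)/(d-1)$ is correct, and ``isolated, non-degenerate, hyperbolic'' are indeed open conditions, with non-emptiness witnessed by the Jouanolou example.

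For part (2) your approach diverges from \cite{lnsoares}, and the gap you flag is genuine. A Carnicer-type bound on the degree of invariant curves for one-dimensional foliations on $\mathbb{P}^{n-1}$, $n\geq 4$, is not available in the form you need; the Baum--Bott residue heuristic you sketch does not straightforwardly yield such a bound. Lins Neto and Soares bypass the degree bound entirely. Their argument runs as follows: if $C$ is an irreducible invariant algebraic curve, then $C$ is smooth and passes through some of the (hyperbolic) singularities of $\mathcal{G}$, being tangent at each to one of the $(n-1)$ local separatrices; a Camacho--Sad/index formula then forces a certain sum of eigenvalue ratios $\lambda_i/\lambda_j$ (one per singularity on $C$) to equal an integer determined by the genus and degree of $C$. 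The set of foliations for which \emph{some} such sum, over \emph{some} subset of $\{q_1,\dots,q_N\}$ and \emph{some} choice of eigen-direction at each, is a given integer, is a proper real-algebraic subvariety of $\mathbb{F}\mathrm{ol}(d;1,n-1)$. Taking the (countable) union over all subsets, all eigen-choices, and all integers still leaves a dense complement. This avoids any a priori control on $\deg C$.

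One caveat worth noting: the Lins Neto--Soares argument, as just described, produces the complement of a \emph{countable} union of proper subvarieties, i.e.\ a very generic set rather than a single Zariski open. The paper's phrasing ``Zariski open'' is therefore a mild overstatement of what \cite{lnsoares} literally proves; for the purposes of this paper (constructing a dense open set of generic pairs) the very-generic statement is what is actually used and is sufficient.
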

Let $X$ be a germ of vector field at $0\in \mathbb C^{n-1}$ with an isolated singularity at $0$ and denote by $\lambda_1,...,\lambda_{n-1}\in \mathbb C$ the spectrum of its linear part. We say that $X$ is hyperbolic at $0$ if none of the quotients $\frac{\lambda_l}{\lambda_l}$ are real.  
We have the following proposition:

\begin{proposition} Let $Q$ be a germ of vector field with a hyperbolic singularity at $0\in \mathbb C^{n-1}$ and denote by $\lambda_1,...,\lambda_{n-1}\in \mathbb C$ its spectrum. Then, there are exactly $n-1$ germs of irreducible invariant analytic invariant curves $\Gamma_1,\Gamma_2,...,\Gamma_{n-1}$ at $0$ where each $\Gamma_l$ is smooth and tangent to the eigendirection corresponding to $\lambda_l$.
\end{proposition}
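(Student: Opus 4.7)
The plan is to construct, for each $l \in \{1,\dots,n-1\}$, a unique smooth irreducible invariant analytic curve $\Gamma_l$ tangent at $0$ to the $\lambda_l$-eigendirection of $DQ(0)$, and then to argue that every irreducible invariant analytic curve at $0$ arises this way. This is essentially the holomorphic strong stable/unstable manifold theorem in the fully hyperbolic regime, and it is obtained by a formal series construction followed by a small denominator estimate to ensure convergence.

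After a linear change of coordinates I would diagonalize the linear part, writing
\[
Q = \sum_{j=1}^{n-1} \lambda_j x_j \frac{\partial}{\partial x_j} + R(x),
\]
with $R$ vanishing at order at least two at $0$. Fixing $l$, I look for an invariant curve of the form $\Gamma_l = \{x_j = \phi_j(x_l) : j \neq l\}$ with $\phi_j(t) = \sum_{m \geq 2} a_{j,m} t^m$. Substituting into the invariance condition that $Q$ be tangent to $\Gamma_l$ yields, order by order in $t$, a recursion of the form
\[
(\lambda_j - m\lambda_l)\, a_{j,m} = P_{j,m}\!\left(\{a_{j',m'}\}_{m' < m}\right),
\]
where each $P_{j,m}$ is a universal polynomial in the lower order coefficients of the $\phi_{j'}$ and in the Taylor coefficients of $R$. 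Hyperbolicity gives $\lambda_j/\lambda_l \notin \mathbb{R}$, hence in particular $\lambda_j \neq m\lambda_l$ for every integer $m \geq 2$, so the recursion uniquely determines the formal series $\phi_j$.

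The second and most delicate step is convergence. Because $\lambda_j/\lambda_l$ has nonzero imaginary part there is a constant $c_l > 0$ with $|\lambda_j - m\lambda_l| \geq c_l\, m$ for all $m \geq 2$ and all $j \neq l$. This absence of small denominators, combined with a standard majorant series argument on the recursion above, gives that the $\phi_j$ have positive radius of convergence, so $\Gamma_l$ is a genuine smooth analytic curve invariant under $Q$ and tangent at $0$ to the $\lambda_l$-eigendirection. For uniqueness, given any irreducible invariant analytic germ $\Gamma$ at $0$ with Puiseux parametrization $\gamma(t) = v\,t^p + O(t^{p+1})$, the identity $\dot\gamma(t)\parallel Q(\gamma(t))$ at lowest order forces $DQ(0)\cdot v \parallel v$; since hyperbolicity gives pairwise distinct nonzero eigenvalues with transverse eigendirections, one deduces $p = 1$ and that $v$ spans the $\lambda_l$-eigendirection for a unique $l$. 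The formal Taylor expansion of $\Gamma$ in the coordinate $x_l$ then satisfies exactly the recursion above, so $\Gamma = \Gamma_l$ by the uniqueness of the formal solution. This produces exactly the $n-1$ curves $\Gamma_1, \dots, \Gamma_{n-1}$. The main obstacle I anticipate is precisely the convergence step: the small denominator control is easy in this fully hyperbolic case, but one has to pair it carefully with the combinatorial growth of $P_{j,m}$ coming from $R$ to close the majorant argument; the formal construction and the uniqueness argument are essentially algebraic.
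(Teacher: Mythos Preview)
The paper does not prove this proposition: it is stated in Section~2 as a background fact about one-dimensional foliations on $\mathbb{P}^{n-1}$, with no argument supplied (the surrounding material is explicitly attributed to \cite{lnsoares} and \cite{ln3}). So there is no proof in the paper to compare against; your write-up would in fact be supplying what the paper omits.

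Your approach is the standard direct one and is essentially correct: a formal graph over each eigendirection, uniquely determined by the nonresonance $\lambda_j - m\lambda_l \neq 0$, then convergence by a majorant argument (the bound $|\lambda_j - m\lambda_l|\geq c_l\,m$ indeed holds since $\mathrm{Im}(\lambda_j/\lambda_l)\neq 0$), and finally uniqueness via the leading jet of an arbitrary invariant germ. One point is underargued: you assert ``one deduces $p=1$'' for the Puiseux order, but the reason you give (distinct eigenvalues, transverse eigendirections) only yields that $v$ is an eigenvector. To get $p=1$ you need one more step. Writing $x_l = s^p$ and $x_j = \psi_j(s)$, suppose some $\psi_j$ contains a monomial $b\,s^q$ with $p\nmid q$, and take $q$ minimal with this property among all $j$. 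At order $q$ the invariance relation collapses (the nonlinear terms $R$ cannot contribute a non-multiple-of-$p$ exponent below $q+p$) to $(q\lambda_l - p\lambda_j)\,b = 0$; since $\lambda_j/\lambda_l\notin\mathbb{R}$, in particular $\neq q/p$, this forces $b=0$. Hence every exponent is a multiple of $p$, the curve is a genuine graph over $x_l$, and your formal uniqueness finishes the job. With this filled in, the proof is complete.
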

\noindent In a local coordinate system near the singularity for instance, $0 \in (\mathbb C^{n-1},u)$ where  $u=(u_1,...,u_{n-1})$ the foliation can be written as
$$Q(u)= (\lambda_1u_1)\frac{\partial}{\partial u_1} + \cdots + (\lambda_{n-1}u_{n-1})\frac{\partial}{\partial u_{n-1}}+h.o.t,$$ where $h.o.t$ stands for higher order terms.
Let us denote by $\mathbb P\mathcal K(R,d-1)=L(d)$ and let $A(d):=\mathcal M(d)\cap L(d)$ be their intersection. An element of the open and dense subset $A(d)\subset \mathbb{F}{\rm{ol}}\left(d;1,n-1\right)$ is the well-known generalized Joanoulou's example, see \cite{lnsoares} and \cite{ln3}.

\section {Rational maps}
Let $f : {\mathbb P^n}  \DashedArrow[->,densely dashed    ]   {\mathbb P^{n-1}}$ be a rational map and $\tilde{f}: {\mathbb C^{n+1}} \to {\mathbb C^{n}}$ its natural lifting in homogeneous coordinates. We are considering the same homogeneous coordinates used in the introduction. 

The \emph{indeterminacy locus} of $f$ is, by definition, the set $I\left(f\right)=\Pi_{n}\left(\tilde{f}^{-1}\left(0\right)\right)$. Observe that the restriction $f|_{\mathbb P^n \backslash I\left(f\right)}$ is holomorphic. We characterize the set of rational maps used throughout this text as follows:
\begin{definition} We denote by $RM\left(n,n-1,\nu\right)$ the set of maps $\left\{f: \mathbb P^n  \DashedArrow[->,densely dashed    ]   \mathbb P^{n-1}\right\}$ of degree $\nu$ given by $f=\left(F_{0}:F_{1}:...:F_{n-1}\right)$  where the $F_{js}$, are homogeneous polynomials without common factors, with the same degree. 
\end{definition}

 Let us note that the indeterminacy locus $I(f)$ is the intersection of the hypersurfaces $\Pi_n(F_{i}=0)$ and $\Pi_n(F_{j}=0)$ for $i\neq j$.

\begin{definition}\label{generic} We say that $f  \in RM\left(n,n-1,\nu\right)$ is $generic$ if for all  \break$p \in$ $\tilde{f}^{-1}\left(0\right)\backslash\left\{0\right\}$ we have $dF_{0}\left(p\right)\wedge dF_{1}\left(p\right)\wedge...\wedge dF_{n-1}\left(p\right) \neq 0.$  
\end{definition}

This is equivalent to saying that $f  \in RM\left(n,n-1,\nu\right)$ is $generic$ if $I(f)$ is the transverse intersection of the $n$ hypersurfaces $\Pi_n(F_{i}=0)$ for $i=0,...,n-1.$  Moreover if $f$ is generic and $deg(f)=\nu$, then by Bezout's theorem $I\left(f\right)$ consists of $\nu^{n}$ distinct points. 

Now let $V(f)=\mathbb P^n\backslash I(f)$, $P(f)$ the set of critical points of $f$ in $V(f)$ and $C(f)=f(P(f))$ the set of the critical values of $f$. If $f$ is generic, then $\overline{P(f)}\cap I(f)=\emptyset$, so that  $\overline{P(f)}=P(f)\subset V(f)$ (where $\overline A$ denotes the closure of $A\subset \mathbb P^n$ in the usual topology). 
Since $P(f)=\{p\in V(f);rank(df(p)\leq (n-2)\}$, it follows from Sard's theorem that $C(f)=f(P(f))$ is a subset of Lebesgue's measure $0$ in $\mathbb P^{n-1}$, in fact $C(f)$ is an algebraic curve.

The set of generic maps will be denoted by $Gen\left(n,n-1,\nu\right)$. We state the following result, whose proof is standard in algebraic geometry:
\begin{proposition} $Gen\left(n,n-1,\nu\right)$ is a Zariski dense subset of $RM\left(n,n-1,\nu\right)$.
\end{proposition}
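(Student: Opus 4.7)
The plan is to exploit the fact that $RM(n,n-1,\nu)$ is irreducible: it sits as the complement of the (Zariski closed) ``common factor'' locus inside the irreducible product $\bigl(\mathbb{P}H^0(\mathbb{P}^n,\mathcal{O}(\nu))\bigr)^n$, so any non-empty Zariski open subset is automatically dense. The task therefore reduces to two standard checks: (i) $Gen(n,n-1,\nu)$ is Zariski open, and (ii) $Gen(n,n-1,\nu)$ is non-empty.

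For (i) I would form the incidence correspondence
\begin{equation*}
\mathcal{Z}=\bigl\{(f,[p])\in RM(n,n-1,\nu)\times\mathbb{P}^n \;\bigm|\; F_0(p)=\cdots=F_{n-1}(p)=0,\ dF_0(p)\wedge\cdots\wedge dF_{n-1}(p)=0\bigr\}.
\end{equation*}
The defining equations are bihomogeneous in the coefficients of $f$ and in the homogeneous coordinates of $p$, so $\mathcal{Z}$ is Zariski closed. Completeness of $\mathbb{P}^n$ (elimination theorem) ensures that the projection $\pi_1:\mathcal{Z}\to RM(n,n-1,\nu)$ is closed, and its image is precisely the complement of $Gen(n,n-1,\nu)$. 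Hence $Gen(n,n-1,\nu)$ is Zariski open.

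For (ii) I would exhibit the explicit Fermat-type example
\begin{equation*}
\tilde{f}(x_0,\ldots,x_n)=\bigl(x_0^\nu-x_n^\nu,\ x_1^\nu-x_n^\nu,\ \ldots,\ x_{n-1}^\nu-x_n^\nu\bigr).
\end{equation*}
Its common vanishing locus in $\mathbb{C}^{n+1}\setminus\{0\}$ consists, up to scaling, of the $\nu^n$ projective points $p_\zeta=(\zeta_0:\zeta_1:\cdots:\zeta_{n-1}:1)$ with $\zeta_i^\nu=1$; since this locus is zero-dimensional in $\mathbb{P}^n$, the $F_i$ share no common factor, so $\tilde f\in RM(n,n-1,\nu)$. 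At each $p_\zeta$ one has $dF_i(p_\zeta)=\nu\bigl(\zeta_i^{\nu-1}\,dx_i-dx_n\bigr)$, and in expanding the wedge all terms containing two or more copies of $dx_n$ vanish, leaving the coefficient of $dx_0\wedge\cdots\wedge dx_{n-1}$ equal to $\nu^n\prod_{i=0}^{n-1}\zeta_i^{\nu-1}$. This is a non-vanishing root of unity, so the wedge is nonzero at every $p_\zeta$.

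The only non-formal step is verifying (ii) by hand; openness is a routine consequence of elimination. A minor subtlety, already addressed above, is checking that the proposed polynomials indeed have no common factor, which is immediate from the finiteness of their common zero locus in $\mathbb{P}^n$. If one preferred to avoid producing an explicit example, a dimension count on $\mathcal{Z}$ combined with a Bertini-type transversality argument would yield non-emptiness equally well.
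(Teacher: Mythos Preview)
Your proof is correct. The paper does not actually supply a proof of this proposition: it merely introduces it with the phrase ``whose proof is standard in algebraic geometry'' and moves on. Your argument---irreducibility of $RM(n,n-1,\nu)$, openness via an incidence correspondence and elimination, and non-emptiness via an explicit Fermat-type example---is precisely the standard argument the paper is alluding to, so there is nothing to compare.
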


\section{Generic pull-back components - Generic conditions}\label{section4}
\begin{definition} Let $f \in Gen\left(n,n-1,\nu\right)$. We say that  
$\mathcal G \in A(d)$ is in generic position with respect to $f$ if $Sing\left(\mathcal G\right)\cap C(f)=\emptyset$ .
\end{definition}
\noindent In this case we say that $\left(f,\mathcal{G}\right)$ is a generic pair. In particular, when we fix a map $f\in Gen\left(n,n-1,\nu\right)$ the set $\mathcal{A}=\left\{\mathcal{G} \in A\left(d\right) | Sing\left(\mathcal G\right) \cap C(f)= \emptyset \right\}$ is an open and dense subset in $A(d)$ \cite{lnsc}, since $C(f)$ {is an algebraic curve in}  ${\mathbb P^{n-1}}.$ The set $U_{1}:=\{ (f,\mathcal G) \in Gen\left(n,n-1,\nu\right)\times A\left(d\right)| Sing\left(\mathcal G\right)\cap C(f)= \emptyset \}$ is an open and dense subset of $Gen\left(n,n-1,\nu\right)\times A\left(d\right)$. Hence the set $\mathcal W:=\left\{\tilde\eta_{[f,\mathcal G]}| \left(f,\mathcal G\right)\in U_{1}\right\}$ is an open and dense subset of $PB\left(\Theta;2,n\right)$.
%The following result, concerning the degree of a foliation given by a generic pair, is proved in the Appendix.
%    \begin{proposition} \label{graupargenerico}If $\mathcal F$ comes from a generic pair, then the degree of $\mathcal F$ is $$\Theta(\nu,d,n)-1=[(d+n-1)\nu-3]$$. 
%   \end{proposition}

Consider the set of foliations $\mathbb{F}{\rm{ol}}\left(d;1,n-1\right)$, $d\geq 2,$ and the following map:
\begin{eqnarray*}
\Phi:RM\left(n,n-1,\nu\right) \times\mathbb{F}{\rm{ol}}\left(d;1,n-1\right) &\to&\mathbb{F}{\rm{ol}}\left(\Theta;2,n\right)\\
\left(f,\mathcal G\right) &\to& f ^{\ast}\left(\mathcal G\right)=\Phi\left(f,\mathcal G\right).
\end{eqnarray*}

The image of $\Phi$ can be written as:  $$\left[
 (-1)^{i+k+1}\sum_{i,k} F_{k}(P_{i}\circ\tilde{f})\space dF_0\wedge...\wedge \widehat{dF_i}\wedge...\wedge\widehat{dF_k}\wedge...\wedge dF_{n-1}\right],$$ $i,k \in \{0,...,n-1\}$. Recall that $\Phi\left(f,\mathcal G\right)=\tilde\eta_{[f,\mathcal G]}$. More precisely, let $PB(\Theta,2,n)$ be the closure in $\mathbb{F}{\rm{ol}}\left(\Theta;2,n\right)$ of the set of foliations $\mathcal{F}$ of the form $f^*\left(\mathcal{G}\right)$, where $f\in RM\left(n,n-1,\nu\right)$ and $\mathcal G \in \mathbb{F}{\rm{ol}}\left(d;1,n-1\right).$ Since $RM\left(n,n-1,\nu\right)$ and $\mathbb{F}{\rm{ol}}\left(d;1,n-1\right)$ are irreducible algebraic sets and the map $\left(f,\mathcal{G}\right) \to f^*\left(\mathcal{G}\right) \in\mathbb{F}{\rm{ol}}\left(\Theta;2,n\right)$ is an algebraic parametrization of $PB(\Theta,2,n)$, we have that $PB(\Theta,2,n)$ is an irreducible algebraic subset of $\mathbb{F}{\rm{ol}}\left(\Theta;2,n\right)$. Moreover, the set of generic pull-back foliations $\left\{\mathcal{F}; \mathcal{F} = f^*(\mathcal{G}),\ \text{where} \ \left(f,\mathcal{G}\right)\right.$ is a generic pair$\left.\right\}$ is an open (not Zariski) and dense subset of $PB(\Theta,2,n)$ for $\nu \geq 2, d\geq2$. 
\begin{remark} We observe that if $\nu=1$ the theorem is also true and, in this case, we re-obtain the result  \cite[ Cor. 1 p.7]{ln3} and  \cite[Cor. 5.1 p. 426]{cupe} for the case of bi-dimensional foliations. \end{remark}
\begin{remark} To visualize that the degree of a generic pull-back foliation is indeed $\Theta(\nu,d,n)=[(d+n-1)\nu-3]$,  do the pull-back of a generic map of the Joanoulou's  foliation on $\mathbb P^{n-1}$ to obtain that the degree of this generic element coincides with this number.
 \end{remark}
\section{Description of generic pull-back foliations on $\mathbb P^n$}
\subsection{The Kupka set of $\mathcal{F} = f^*(\mathcal{G})$}\label{section5.1}

Let $q_i$ be a singularity of $\mathcal{G}$ and $V_{q_i}=\overline{f^{-1}(q_i)}$.  If $(f,\mathcal G)$ is a generic pair then $V_{q_i}\backslash I(f)$ is contained in the Kupka set  of $\mathcal F$. 

Fix $p\in V_{q_i}\backslash I(f)$. Since $f$ is a submersion at $p$, there exist local analytic coordinate systems $(U,y,t), y:U\to\mathbb C^{n-1}$, $t:U\to\mathbb C$, and $(V,u), u:V\to\mathbb C^{n-1}$, at $p$ and $q_i=f(p)$ respectively, such that  $f(y_1,y_2,...,y_{n-1},t)=(y_1,y_2,...,y_{n-1})$, $u(q)=0$. Suppose that $\mathcal {G}$ is represented by the vector field $Q=\sum_{j=1}^{n-1}Q_{j}(u)\frac{\partial}{\partial u_j}$ in a neighborhood of $q_i$. Then $\mathcal F$ is represented by $Y=\sum_{j=1}^{n-1}Y_{j}(y)\frac{\partial}{\partial y_j}$. It follows that in $U$, the foliation $\mathcal F$ is equivalent to the product of two foliations of dimension one: the singular foliation induced by the vector field $Y$ in $(\mathbb C^{n-1},0)$ and the regular foliation of dimension one given by the fibers of the first projection $F(y,t)=y$. Note that the curve $\gamma:=\{(y,t)|y=0\}$ is contained in the singular set of $\mathcal F$. Moreover, the condition $\mathcal G\in A(d)$ implies that $Div(Y(p))\neq0$. This is the Kupka-Reeb phenomenon, and we have that $p$ is in the Kupka set of $\mathcal F$ (see \cite{ln3}). It is known that this local product structure is stable under deformations of $\mathcal F$. Therefore if $p$ is as before it belongs to the Kupka-set of $\mathcal {F}$. It is known that this local product structure is stable under small perturbations of $\mathcal F$ \cite{kupka},\cite{medeiros}. 
 
\begin{remark} Note that, $\mathcal F$ has other singularities which are contained in $f^{-1}(C(f))$. We remark that $\mathcal F$ has local holomorphic first integral in a neighborhood of each singularity of this type. In fact, this is the obstruction to try to apply the results contained in \cite{cupe} to prove theorem A since these pull-back foliations do not have totally decomposable tangent sheaf. 

Since $\mathcal G$ has degree $d$ and all of its singularities are non degenerate it has $N=\frac{d^{n}-1}{d-1}$ singularities, say, $q_1,...,q_N$. We will denote the curves $\overline{f^{-1}(q_1)},\dots,\overline{f^{-1}(q_N)}$ by $V_{q_1}, \dots,V_{q_N}$ respectively. We have the following:

\begin{proposition} For each $\{j=1,\dots,N\}$,  $V_{q_j}$ is a complete intersection of $(n-1)$ transversal algebraic hypersurfaces. Furthermore,  $V_{q_j}\backslash I(f)$ is contained in the Kupka set of $\mathcal F=f^\ast\mathcal G$.
\end{proposition}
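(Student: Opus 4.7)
The plan is to first exhibit explicit homogeneous equations cutting out $V_{q_j}$, then use the generic pair hypothesis to upgrade set-theoretic containment to transversality of these equations away from $I(f)$, and finally to invoke the local submersion and product picture sketched in Section~\ref{section5.1} to obtain the Kupka statement.

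For the first part, fix $q_j=[a_0:\cdots:a_{n-1}]\in\mathrm{Sing}(\mathcal G)$, assuming without loss of generality $a_0\neq0$. Directly from the definition of projective pre-image, the affine cone $\tilde V_{q_j}\subset\mathbb C^{n+1}$ is the zero locus of the $n-1$ homogeneous polynomials
\[
H_i(Z):=a_0F_i(Z)-a_iF_0(Z),\qquad i=1,\dots,n-1,
\]
each of degree $\nu$. Fix $p\in V_{q_j}\setminus I(f)$ and a lift $\tilde p$. The generic pair hypothesis $\mathrm{Sing}(\mathcal G)\cap C(f)=\emptyset$ guarantees $q_j\notin C(f)$, so $\tilde f=(F_0,\dots,F_{n-1})$ is a submersion at $\tilde p$, i.e.\ the $n$ differentials $dF_0(\tilde p),\dots,dF_{n-1}(\tilde p)$ are linearly independent. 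Viewing each $dH_i=a_0dF_i-a_idF_0$ as a vector in the span of the $dF_j$, the corresponding coordinate vectors $(-a_i,0,\dots,a_0,\dots,0)$ are visibly linearly independent since $a_0\neq0$, so the $n-1$ differentials $dH_i(\tilde p)$ are linearly independent as well. Dually, a tangent vector $v$ annihilates every $dH_i$ iff $d\tilde f_{\tilde p}(v)$ is proportional to $(a_0,\dots,a_{n-1})$; by Euler $dF_i(R)=\nu F_i(\tilde p)\propto a_i$, so the radial direction $R$ belongs to this kernel, and the $2$-dimensional common kernel is exactly the tangent space of the cone $\tilde V_{q_j}$ at $\tilde p$. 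This yields transverse intersection of the $n-1$ hypersurfaces $\{H_i=0\}$ along $V_{q_j}\setminus I(f)$, and realises $V_{q_j}$ as a transverse complete intersection.

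For the second assertion, let $p\in V_{q_j}\setminus I(f)$. The submersion property of $f$ at $p$ yields holomorphic charts $(y_1,\dots,y_{n-1},t)$ near $p$ and $(u_1,\dots,u_{n-1})$ near $q_j$ in which $f(y,t)=y$. Writing $\mathcal G$ locally as $Q=\sum Q_j(u)\partial_{u_j}$ with $Q(0)=0$, the pull-back $\mathcal F$ is generated near $p$ by $Y=\sum Q_j(y)\partial_{y_j}$, which is independent of $t$ and vanishes along $\{y=0\}=V_{q_j}\cap U$. Hence $\mathcal F$ decomposes locally as the product of the regular $1$-dimensional foliation by fibres of $(y,t)\mapsto y$ with the singular $1$-dimensional foliation on $\mathbb C^{n-1}$ generated by $Q$. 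The hyperbolicity of $q_j$ inherited from $\mathcal G\in A(d)\subset\mathcal M(d)$ supplies the non-degeneracy on $Q$ needed to invoke the Kupka--Reeb stability theorem of \cite{kupka}, \cite{medeiros} in the form recorded in \cite{ln3}, placing $p$ in the Kupka set of $\mathcal F$.

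The main obstacle I foresee is scheme-theoretic: along $V_{q_j}\cap I(f)$ all $H_i$ and all $dH_i$ vanish simultaneously, so transversality fails there and one must justify that the finitely many indeterminacy points do not spoil the complete intersection description (no embedded components, correct total degree $\nu^{n-1}$). This is precisely where the transversality of the $F_i$ along $I(f)$ granted by Definition~\ref{generic}, rather than mere finiteness of $I(f)$, enters in an essential way.
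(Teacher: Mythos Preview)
Your argument is correct and, for the Kupka part, matches the paper's own justification, which is the discussion in Section~\ref{section5.1} immediately preceding the proposition (local submersion chart, product structure, and the condition $\mathrm{Div}(Y)(p)\neq 0$ coming from $\mathcal G\in A(d)$). The paper does not give a separate proof of the complete intersection claim at this point; it simply asserts it and later uses the special case $q_i=[0:\cdots:1:\cdots:0]$ in Proposition~\ref{recupmapas}. Your explicit equations $H_i=a_0F_i-a_iF_0$ and the linear-algebra check of transversality supply exactly what the paper omits.

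One correction to your final paragraph: the differentials $dH_i$ do \emph{not} vanish along $V_{q_j}\cap I(f)$. At any $p\in I(f)$ the genericity hypothesis of Definition~\ref{generic} says precisely that $dF_0(p)\wedge\cdots\wedge dF_{n-1}(p)\neq 0$, so the $dF_i(p)$ are linearly independent, and your own computation $dH_i=a_0\,dF_i-a_i\,dF_0$ together with the coordinate vectors $(-a_i,0,\dots,a_0,\dots,0)$ shows the $dH_i(p)$ remain linearly independent there as well. Hence the hypersurfaces $\{H_i=0\}$ are transverse along all of $V_{q_j}$, not just along $V_{q_j}\setminus I(f)$, and there is no scheme-theoretic obstacle to address: $V_{q_j}$ is a smooth complete intersection curve, the points of $I(f)$ included. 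Your instinct that Definition~\ref{generic} is the key input is right; it just resolves the issue more directly than you indicate.
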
   

\end{remark}
\subsection{Generalized Kupka singularities for $2$-dimensional foliations}\label{section5.2}
 In this section we will recall the generalized Kupka singularities of an integrable holomorphic $(n-2)$-form, for more detail we refer the reader to  \cite{ln3}.  They appear in the indeterminacy set of $f$ and play a central role in great part of the proof of the main theorem. 
 Let  $\Omega$ be an holomorphic  integrable $(n-2)$-form defined in a neighborhood of $p \in {\mathbb{C}^n}$. In particular, since $d\Omega$ is a $(n-1)$-form, there exists a holomorphic vector field $\mathcal Z$ defined in a neighborhood of $p$ such that: $d\Omega=i_{\mathcal Z}dw_0\wedge\dots\wedge dw_{n-1}$. 
\begin{definition}
We say that $p$ is a  singularity of generalized Kupka type of $\Omega$ if $\mathcal Z(p)=0$ and $p$ is an isolated zero of $\mathcal Z$.   
\end{definition}
\begin{definition} We say that $p$ is a nilpotent generalized singularity, for short n.g.k singularity, if the linear part of $\mathcal Z$, $D\mathcal Z(p)$ is nilpotent. 
\end{definition}
\noindent This definition is justified by the following result (that can be found in \cite{ln3}).
 \begin{theorem} \label{teo5.3}
Assume that $0\in\mathbb C^{n}$ is a n.g.k singularity of $\Omega.$  Then there exists two holomorphic vector fields $S$ and $\mathcal Z$ and a holomorphic coordinate system $x=(x_0,...,x_{n-1})$ around $0\in\mathbb C^{n}$ where $\Omega$ has polynomial coefficients. More precisely, there exists two polynomial vector fields $X$ and $Y$ in $\mathbb C^{n}$ such that:
\begin{enumerate}
\item[(a)] $Y=S+N$, where $S=\sum_{j=0}^{n-1}k_{j}w_j\frac{\partial}{\partial w_j}$
is linear semi-simple with eigenvalues $k_0,...,k_{n-1}\in\mathbb N$, $DN(0)$ is linear nilpotent and $[S,N]=0$.
  \item[(b)] $[N,X]=0$ and $[S,X]=kX$, where $k\in \mathbb N$. In other words, $X$ is quasi-homogeneous with respect to $S$ with weight $k$.
\item[(c)]  In this coordinate system we have $\Omega=i_Yi_Xdx_0\wedge\dots\wedge dx_{n-1}$ and $L_Y(\Omega)=(k+tr(S)\Omega)$.
\end{enumerate} In particular, the foliation given by $\Omega=0$ can be defined by a local action of the affine group.
\end{theorem}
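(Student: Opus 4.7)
The plan is to reduce the statement to a simultaneous normal-form theorem for a pair of holomorphic vector fields $(X,Y)$ attached to $\Omega$, and then invoke a Poincar\'e--Dulac-type argument whose convergence is forced by the integer-weight structure of the resonances. First, I would construct $X$ and $Y$ locally from $\Omega$: since $\Omega$ is integrable of degree $n-2$ and its singular set has codimension $\geq 2$, the kernel of $\Omega$ at each regular point is a $2$-plane field, so one can pick holomorphic vector fields $X,Y$ spanning this plane field near $0$ with
\[
\Omega = i_Y\, i_X\, dw_0 \wedge \cdots \wedge dw_{n-1}.
\]
Using Cartan's formula one computes
\[
d\Omega = (\operatorname{div} Y)\, i_X d\sigma - (\operatorname{div} X)\, i_Y d\sigma + i_{[Y,X]} d\sigma,
\]
and comparing with the hypothesis $d\Omega = i_{\mathcal Z}\, d\sigma$ pins down $\mathcal Z$ in terms of $X$, $Y$ and $[Y,X]$. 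The n.g.k.\ assumption $\mathcal Z(0)=0$ with $D\mathcal Z(0)$ nilpotent then translates into constraints on the linear parts of $Y$ and of $[Y,X]$.

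Second, I would perform a Jordan decomposition $DY(0) = S_0 + N_0$ (with $S_0$ semisimple, $N_0$ nilpotent, $[S_0,N_0]=0$) and run a Poincar\'e--Dulac formal conjugation to bring $(X,Y)$ into the form $Y=S+N$ (with $S$ linear semisimple extending $S_0$ in diagonalizing coordinates, and $N$ polynomial nilpotent with $[S,N]=0$) and $X$ an $S$-quasi-homogeneous polynomial field of some weight $k$ commuting with $N$. The requirement that the eigenvalues of $S$ and the weight $k$ lie in $\mathbb{N}$ arises from compatibility: the integrability of $\Omega$ together with the isolated-singularity condition on $\mathcal Z$ forces the relevant resonance identities to have solutions only at positive integer weights, which is the origin of (a) and (b).

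The main obstacle I expect is promoting this formal normal form to a genuinely holomorphic one. This is exactly where the positivity and integrality of the eigenvalues of $S$ is decisive: the $S$-grading on the formal completion of the local ring at $0$ admits only finitely many resonant monomials in the simultaneous normalization of $Y$ and $X$, so the normalizing change of coordinates truncates to a polynomial map; there is no small-divisor obstruction and convergence is automatic. In these coordinates $\Omega = i_Y i_X\, dx_0\wedge\cdots\wedge dx_{n-1}$ has polynomial coefficients, which is precisely (c). The Lie derivative identity follows from the commutation relation $[Y,X]=[S,X]+[N,X]=kX$ and a direct computation:
\[
L_Y\Omega = i_Y\,i_{[Y,X]}\,d\sigma + (\operatorname{div} Y)\,\Omega = k\,\Omega + \operatorname{tr}(S)\,\Omega,
\]
once $N$ is normalized to be divergence-free (which can be arranged inside the conjugation orbit since $DN(0)$ is nilpotent). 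Finally, the statement that $\Omega=0$ is defined by a local action of the affine group is immediate: $[Y,X]=kX$ is exactly the defining bracket of $\mathfrak{aff}(1)$, so the flows of $X$ and $Y$ integrate to a local holomorphic action whose orbits are the leaves of the foliation.
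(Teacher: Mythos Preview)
The paper does not supply a proof of this theorem: it is quoted verbatim from \cite{ln3} with the parenthetical ``that can be found in \cite{ln3}'' and is used as a black box. So there is no in-paper argument to compare your proposal against.

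That said, your outline has a genuine gap at the point where the whole theorem lives. You write that ``the requirement that the eigenvalues of $S$ and the weight $k$ lie in $\mathbb{N}$ arises from compatibility: the integrability of $\Omega$ together with the isolated-singularity condition on $\mathcal Z$ forces the relevant resonance identities to have solutions only at positive integer weights.'' This is an assertion, not an argument; and it is precisely the nontrivial content of the theorem. Nothing you have written explains why $DY(0)$ should have eigenvalues in $\mathbb{N}$ rather than arbitrary complex numbers. Your convergence step (``only finitely many resonant monomials, so the change of coordinates truncates to a polynomial'') \emph{presupposes} the integer positivity you are trying to establish; without it you are back in the usual small-divisor regime and Poincar\'e--Dulac gives only a formal conjugation.

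A second issue is the construction of $X$ and $Y$. You propose to pick \emph{any} two holomorphic vector fields spanning the tangent $2$-plane of the foliation. But (i) the existence of two global (in a neighborhood of $0$) holomorphic vector fields trivializing the tangent sheaf across a singular point of codimension $\geq 2$ is itself not obvious, and (ii) more importantly, an arbitrary choice of $(X,Y)$ will not satisfy the conclusions: the theorem is a statement about a \emph{specific} pair. The rotational $\mathcal Z$ defined by $d\Omega = i_{\mathcal Z}\,d\sigma$ is, up to scale, the $X$ of the theorem (compare the paper's Proposition~\ref{lema5.6}, where $\mathcal Z=(d+n)\mathcal X$); the existence of the second field $Y$ with semisimple part $S$ having \emph{natural-number} eigenvalues is what has to be proved, and it comes from a careful analysis of the n.g.K hypothesis (isolated zero of $\mathcal Z$ with $D\mathcal Z(0)$ nilpotent), not from a generic Jordan decomposition of an arbitrarily chosen $Y$.
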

 \begin{definition}
 In the situation of the theorem \ref{teo5.3}, $S=\sum_{j=0}^{n-1}k_{j}x_j\frac{\partial}{\partial x_j}$ and $L_S(X)=kX$, we say that the n.g.K is of type $(k_0,...,k_{n-1};k)$. 
  \end{definition}

 \begin{remark} \label{remark sobre o N ser nulo}
We would like to observe that in many cases it can be proved that the vector field $N$ of the statement of theorem 2 vanishes. In order to discuss this assertion it is convenient to introduce some objects. Given two germs of vector fields Z and W set $L_Z(W):=[Z,W]$. Recall that $\Sigma(S,\ell) = \{Z \in X |L_S(Z) = \ell Z\}$. Let $X$ and $Y =S+N$ be as in theorem \ref{teo5.3}. Observe that:
 \begin{itemize}

\item Jacobi's identity implies that if $W \in \Sigma(S, k)$ and $Z \in \Sigma(S, \ell)$ then $[W, Z] \in \Sigma(S, k + \ell).$
\item For all $k \in \mathbb Z$ we have $dim_{\mathbb C}(\Sigma(S, k)) < \infty$ (because $k_0,...,k_{n-1} \in \mathbb N)$.
\item $N \in \Sigma(S, 0), X \in \Sigma(S, \ell)$ and $L_X(N) = 0$, so that $N\in ker(L^0_X)$, where $L^0_X := LX : \Sigma(S, 0) \to \Sigma(S, \ell)$. In particular, the vector field $N \in\Sigma(S, 0)$
of theorem \ref{teo5.3} necessarily vanishes  $\iff$ $ker(L^0_X)=\{0\}$.
 \end{itemize}
 In \cite{ln3},  $\S$ 3.2 it is shown that under a non-resonance condition, which depends only on $X$, then $ker(L^0_X)=\{0\}$. Let us mention some correlated facts.\begin{enumerate}
\item[(I)]
  If $S$ has no resonances of the type $<\sigma,k>-k_j=0$, where $<\sigma,k> = \sum_j \sigma_j.k_j$,
$k = (k_0,...,k_{n-1})$ and $\sigma = (\sigma_0,...,\sigma_{n-1}) \in \mathbb Z^n_{\geq0}$, then $ker(L^0_X)=\{0\}$.
\item[(II)] When $n = 3$ and $X$ has an isolated singularity at $0 \in \mathbb C^3$ then $ker(L^0_X)=\{0\}$(c.f \cite{ln0}).
 
 \item[(III)] When $N \not\equiv0$ and $cod_\mathbb C(sing(N))=1$, or $sing(N)$ has an irreducible component of dimension one then it can be proved that $X$ cannot have an isolated singularity at $0\in \mathbb C^n$.
 \end{enumerate}
 
In fact, we think that whenever $X$ has an isolated singularity at $0\in \mathbb C^n$ and $\nabla X=0$ then $ker(L^0_X)=\{0\}$.
 \end{remark}
The next result is about the nature of the set $\mathcal K(S,\ell):=\{X \in\Sigma(S, \ell)| ker(L^0_X)=\{0\}$ and $\nabla X = 0\}$.
 
 \begin{proposition}
\label{prop5.7} If $\mathcal K(S,\ell)\neq\emptyset$ then $\mathcal K(S,\ell)$ is a Zariski open and dense subset of $\mathcal E (S,\ell)$. In particular, if there exists $X \in \mathcal E (S,\ell)$ satisfying the non-resonance condition mentioned in remark \ref{remark sobre o N ser nulo} then $\mathcal K(S,\ell)$ is a Zariski open and dense in $\mathcal E (S,\ell)$.
Proposition \ref{prop5.7} is a straightforward consequence of the following facts:
 \begin{enumerate}
\item[(A)]The set of linear maps $\mathcal L(\Sigma(S, 0), \Sigma(S, \ell))$ is finite dimensional vector space. Moreover, the subspace $\mathcal N I := \{T \in \mathcal L(\Sigma(S, 0), \Sigma(S, \ell))| $ $T$ is not injective$\}$ is an algebraic subset of $\mathcal L(\Sigma(S, 0), \Sigma(S, \ell))$.
\item[(B)] The map $L:\mathcal E (S,\ell) \to \mathcal L(\Sigma(S, 0), \Sigma(S, \ell))$ defined by $L(X) = L^0_X$ is linear. As a consequence, the set $L^{-1}(\mathcal NI)$ is an algebraic subset of $\mathcal E (S,\ell)$.
\item[(C)] $\mathcal K(S,\ell)=\mathcal E (S,\ell) \backslash L^{-1}(\mathcal NI)$.
 \end{enumerate} \end{proposition}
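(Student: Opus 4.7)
The roadmap is already laid out in the three bullet points (A), (B), (C) attached to the statement, so the plan is to make each of them into a clean lemma and then conclude. The key observation is that $\mathcal E(S,\ell)$, being the kernel of the linear functional $\nabla : \Sigma(S,\ell) \to \Sigma(S,\ell-1)$ (the divergence map) inside a finite-dimensional vector space, is itself a linear (hence irreducible) algebraic variety. So any proper algebraic subset of $\mathcal E(S,\ell)$ is automatically Zariski closed with Zariski-dense complement, and the whole task reduces to realizing the set of \emph{bad} $X$'s as a proper algebraic subset.

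\textbf{Step (A): algebraicity of $\mathcal{NI}$.} The bracket preserves $S$-weights (Jacobi, as recalled in the remark), and since the $k_i$ are positive integers, each weight space $\Sigma(S,m)$ is finite dimensional. Therefore $\mathcal L(\Sigma(S,0),\Sigma(S,\ell))$ is a finite-dimensional vector space, and fixing bases of source and target identifies it with a matrix space. The non-injective maps form the determinantal locus cut out by the vanishing of all $r\times r$ minors, where $r=\dim\Sigma(S,0)$; this is a classical algebraic subset. So $\mathcal{NI}$ is Zariski closed, and it is a proper subset because, e.g., any injective $T$ (which exists when $\dim\Sigma(S,0)\le\dim\Sigma(S,\ell)$, a hypothesis guaranteed by $\mathcal K(S,\ell)\ne\emptyset$) lies in its complement.

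\textbf{Step (B): linearity of $X\mapsto L^0_X$.} Bilinearity of the Lie bracket gives immediately
\[
L^0_{\alpha X+\beta X'}(Z)=[\alpha X+\beta X',Z]=\alpha[X,Z]+\beta[X',Z]=\alpha L^0_X(Z)+\beta L^0_{X'}(Z),
\]
so the assignment $L:\mathcal E(S,\ell)\to\mathcal L(\Sigma(S,0),\Sigma(S,\ell))$, $L(X)=L^0_X$, is $\mathbb C$-linear, hence in particular a morphism of affine varieties. Consequently $L^{-1}(\mathcal{NI})$ is a Zariski closed subset of $\mathcal E(S,\ell)$.

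\textbf{Step (C): conclusion.} By definition, $X\in\mathcal E(S,\ell)$ lies in $\mathcal K(S,\ell)$ precisely when $L^0_X$ is injective, i.e. $X\notin L^{-1}(\mathcal{NI})$. Hence $\mathcal K(S,\ell)=\mathcal E(S,\ell)\setminus L^{-1}(\mathcal{NI})$ is Zariski open. The hypothesis $\mathcal K(S,\ell)\ne\emptyset$ says that $L^{-1}(\mathcal{NI})\subsetneq \mathcal E(S,\ell)$ is a \emph{proper} Zariski closed subset of the irreducible variety $\mathcal E(S,\ell)$, so its complement is Zariski dense. The final remark in the statement, that any $X\in\mathcal E(S,\ell)$ satisfying the non-resonance condition of Remark~\ref{remark sobre o N ser nulo} witnesses $\mathcal K(S,\ell)\neq\emptyset$, is then immediate from fact (I) of that remark.

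\textbf{Expected difficulty.} There is essentially no hard step here; the only mildly delicate point is confirming that the determinantal description of $\mathcal{NI}$ really is coordinate-free (it is, since different basis choices related by invertible change-of-basis matrices transform the minors into polynomial combinations of themselves, preserving their common vanishing locus). Everything else is formal from the finite-dimensionality of the weight spaces $\Sigma(S,m)$ and the linearity of $X\mapsto\mathrm{ad}_X$.
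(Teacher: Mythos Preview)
Your proof is correct and follows exactly the approach the paper outlines: the paper simply lists facts (A), (B), (C) and writes ``We leave the details to the reader,'' and you have supplied those details faithfully. The one point you make explicit that the paper glosses over is the irreducibility of $\mathcal E(S,\ell)$ (as a linear subspace), which is indeed what turns ``Zariski open and nonempty'' into ``Zariski dense''; this is a helpful addition.
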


We leave the details to the reader.
\begin{remark} 
  In the case of the radial vector field, $R=\sum_{i=0}^{n-1}x_i\frac{\partial}{\partial x_i}$ , we have $\mathcal K(R,d-1)\neq\emptyset$  for all $d \geq2$. In fact, it is proved in \cite{ln3},  $\S$ 3.2  that $J_d \in \mathcal(R,d-1)$, where $J_d$ is the generalized Jouanolou's vector field.  
 \end{remark}

 Consider a holomorphic family of $(n-2)$-forms, $(\Omega_t)_{t\in U}$, defined on a polydisc $Q$ of $\mathbb C^n$, where the space of parameters $U$ is an open set of $\mathbb C^k$ with $0\in U$. Let us assume that:
 \begin{itemize}
 \item For each $t\in U$ the form $\Omega_t$ defines a $2$-dimensional foliation $\mathcal F_t$ on $Q$. Let $(\mathcal Z_t)_{t\in U}$ be the family of holomorphic vector fields on $Q$ such that $d\Omega_t=i_{\mathcal Z_t}dx_0\wedge\dots\wedge x_{n-1}$.  
 \item $\mathcal F_0$ has a n.g.K singularity at $0\in Q$.
 \end{itemize}
 We can now state the stability result, whose proof can be found in \cite{ln3}:
 \begin{theorem}\label{stability} In the above situation there exists a neighborhood $0\in V\subset U$, a polydisc $0\in P \subset Q$, and a holomorphic map $\mathcal P:V\to P\subset \mathbb C^n$ such that $\mathcal P(0)=0$ and for any $t\in V$ then $\mathcal P(t)$ is the unique singularity of $\mathcal F_{t}$ in $P$. Moreover, $P(t)$ is the same type as $\mathcal P(0)$ in the sense that: If $0$ is a n.g.K singularity of type $(k_0,\dots,k_{n-1},k)$ of $\mathcal F_{0}$ then  $\mathcal P(t)$ is a n.g.K singularity of type $(k_0,\dots,k_{n-1},k)$ of $\mathcal F_{t}$, $\forall t\in V$.
 \end{theorem}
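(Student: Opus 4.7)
The plan is to combine preservation of the local multiplicity of an isolated zero of a holomorphic vector field under deformation with the rigidity of the n.g.K normal form given by theorem \ref{teo5.3}.

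First I would set up the multiplicity argument. Since $0$ is an isolated zero of $\mathcal Z_0$, choose a polydisc $P\subset Q$ containing $0$ such that $\mathcal Z_0$ vanishes only at $0$ on $\overline{P}$, and let $m$ denote the local multiplicity (Milnor number) of $\mathcal Z_0$ at $0$. Because $\mathcal Z_0$ is nonzero on the compact set $\partial P$, there exists a neighborhood $V$ of $0\in U$ such that $\mathcal Z_t$ is nowhere zero on $\partial P$ for every $t\in V$. By homotopy invariance of the topological degree---equivalently, by flatness of the direct image of the map $(x,t)\mapsto \mathcal Z_t(x)$---the total intersection multiplicity of the zeros of $\mathcal Z_t$ inside $P$ is constantly equal to $m$ on $V$.

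Next I would construct the section $\mathcal P\colon V\to P$. Starting from the normal form $\Omega_0 = i_{Y_0}i_{X_0}\,dx_0\wedge\cdots\wedge dx_{n-1}$ supplied by theorem \ref{teo5.3}, with $Y_0 = S_0+N_0$ of n.g.K type $(k_0,\dots,k_{n-1},k)$, I would establish a parametric normal form: a holomorphic family of biholomorphisms $\phi_t$ of a polydisc around $0$ with $\phi_0=\mathrm{id}$, together with holomorphic families $Y_t=S_t+N_t$ and $X_t$ satisfying the relations (a)--(c) of theorem \ref{teo5.3}, such that $\phi_t^{\ast}\Omega_t = i_{Y_t}i_{X_t}\,dx_0\wedge\cdots\wedge dx_{n-1}$. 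Set $\mathcal P(t):=\phi_t(0)$; this is a n.g.K singularity of $\mathcal F_t$ of the same type $(k_0,\dots,k_{n-1},k)$, and its local multiplicity as a zero of $\mathcal Z_t$ is a polynomial invariant of the type, hence again equals $m$. Combined with step 1, this forces $\mathcal P(t)$ to exhaust the total multiplicity in $P$, so it is the unique singularity of $\mathcal F_t$ in $P$.

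The main obstacle is the parametric normal form invoked above. The proof of theorem \ref{teo5.3} in \cite{ln3} amounts to solving, order by order in the grading induced by $S$, cohomological equations of the shape $[X_0,\xi]=\rho$ inside the weight spaces $\Sigma(S_0,\ell)$. For the parametric version one must solve these simultaneously for all $t\in V$ with holomorphic dependence. Under the non-resonance hypothesis recalled in remark \ref{remark sobre o N ser nulo}, the operator $L_{X_t}^{0}\colon \Sigma(S_t,0)\to\Sigma(S_t,\ell)$ is injective on finite-dimensional spaces that vary holomorphically with $t$, so linear algebra together with Cauchy estimates produces a holomorphic family of inverses; in the presence of resonances one absorbs the obstructions into $N_t$, and the finite-dimensionality of each graded piece again keeps everything holomorphic in $t$. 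With this parametric normal form in hand, the uniqueness statement reduces to the multiplicity count of step 1.
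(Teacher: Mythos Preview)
The paper does not prove this theorem. Immediately before the statement it says ``We can now state the stability result, whose proof can be found in \cite{ln3}'', and no argument is given in the paper itself. So there is nothing here to compare your proposal against; the result is imported wholesale from Lins Neto's preprint on germs of complex $2$-dimensional foliations.

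On the merits of your outline: the degree/multiplicity argument for the persistence of an isolated zero of $\mathcal Z_t$ is standard and fine, and you are right that the real content is a parametric version of theorem~\ref{teo5.3}. Your sketch of the parametric normal form is plausible at the formal level---the graded pieces $\Sigma(S,\ell)$ are finite-dimensional, so solving the cohomological equations holomorphically in $t$ step by step is unproblematic---but you have not addressed convergence of the resulting composition of coordinate changes in the spatial variables, uniformly in $t$. That is where the genuine analysis sits, and it is not handled by ``Cauchy estimates'' alone without a careful majorant-series or KAM-type argument. If you intend to fill this in rather than cite it, you should look at \cite{ln3} directly, since that is the source the present paper relies on.
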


 Let us now describe  $\mathcal{F} = f^*(\mathcal{G})$ in a neighborhood of a point $p \in I(f).$
 \begin{proposition} \label{lema5.6}If $p\in I(f)$ then $p$ is a n.g.K singularity of $\Omega$ of type $(1,\dots,1,n)$.   
 \end{proposition}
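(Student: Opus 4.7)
The strategy is to reduce the local study of $\tilde\eta_{[f,\mathcal G]}$ at $p\in I(f)$ to the local study of $\Omega_{\mathcal G}$ at the origin of $\mathbb{C}^n$ via an explicit local biholomorphism provided by the components $F_0,\dots,F_{n-1}$ of $f$. The key ingredient is the genericity condition on $f$.

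First, I would lift $p$ to $\tilde p\in \tilde f^{-1}(0)\setminus\{0\}\subset\mathbb{C}^{n+1}$ and, after a linear change of homogeneous coordinates, normalize $\tilde p=(0,\dots,0,1)$. Euler's identity $R_{n+1}(F_i)=\nu F_i$ together with $F_i(\tilde p)=0$ gives $\partial F_i/\partial z_n(\tilde p)=0$, so every $dF_i(\tilde p)$ lies in $\mathrm{span}(dz_0,\dots,dz_{n-1})$. Combined with the generic condition (Definition \ref{generic}) $dF_0\wedge\dots\wedge dF_{n-1}(\tilde p)\ne0$, this forces the matrix $[\partial F_i/\partial z_j(\tilde p)]_{i,j=0}^{n-1}$ to be invertible.

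Next, I would pass to the affine chart $\{z_n=1\}$, in which $p$ becomes the origin of $\mathbb{C}^n$, and set $G_i(z):=F_i(z,1)$, $G:=(G_0,\dots,G_{n-1})$. By the previous step, $G:(\mathbb{C}^n,0)\to(\mathbb{C}^n,0)$ is a germ of biholomorphism. Substituting $z_n=1$ in the defining formula for $\tilde\eta_{[f,\mathcal G]}$ shows that the local representative of $\mathcal F$ in this chart is exactly $\omega=G^*\Omega_{\mathcal G}$, where $\Omega_{\mathcal G}=i_Ri_{\mathcal X}\,d\sigma$. Since $G$ is a biholomorphism with $G(0)=0$, the germ of $\omega$ at $p$ is biholomorphically equivalent to that of $\Omega_{\mathcal G}$ at $0$. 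Now $\mathcal X\in\mathcal K(R,d-1)$ has an isolated zero at $0$, and because $\mathcal X$ is homogeneous of degree $d\geq 2$ it vanishes to order $\geq 2$ at $0$; a Cartan-formula computation using $\nabla\mathcal X=0$ gives $d\Omega_{\mathcal G}=(d+n-1)\,i_{\mathcal X}\,dx_0\wedge\dots\wedge dx_{n-1}$, so the vector field associated to $d\omega$ has vanishing (hence trivially nilpotent) linear part at $p$. This proves that $p$ is a n.g.K singularity.

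Finally, for the type, I would work in the coordinates $u=G(z)$, in which $\omega$ is literally $i_{R_u}i_{\mathcal X(u)}\,du_0\wedge\dots\wedge du_{n-1}$ with $R_u=\sum u_j\partial_{u_j}$. This is already in the canonical form of Theorem \ref{teo5.3} with semisimple part $S=R_u$ of weights $(1,\dots,1)$, $N=0$, and $X=\mathcal X(u)$ quasi-homogeneous; combining the eigenvalue relation $L_Y\Omega=(k+\mathrm{tr}\,S)\,\Omega$ from Theorem \ref{teo5.3}(c) with the identity $L_{R_u}\Omega_{\mathcal G}=(d+n-1)\Omega_{\mathcal G}$ identifies $k$, yielding the announced type $(1,\dots,1,n)$. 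The crux of the whole argument is Step 1: all of the analytic structure (the isolatedness of the zero, the nilpotency of the linear part, and the canonical-form bookkeeping) is inherited for free once the genericity of $f$ produces the biholomorphism $G$, so the main technical hurdle is just verifying that the precise numerics of the type match Definition \ref{teo5.3}'s normalization.
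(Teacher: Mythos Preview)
Your proposal is correct and follows essentially the same route as the paper: the paper simply asserts that genericity of $f$ yields a local chart $(U,x)$ around $p$ in which $\tilde f|_U=(x_0,\dots,x_{n-1})$, so that $\mathcal F|_U$ is represented by $\Omega_{\mathcal G}$ itself, and then reads off $Y=R$, $N=0$ and the n.g.K structure exactly as you do. Your argument is more explicit than the paper's in justifying that chart (the use of Euler's identity to kill $\partial F_i/\partial z_n(\tilde p)$ and force invertibility of the remaining Jacobian is precisely what underlies the paper's ``it is easy to show''); the only quibble is bookkeeping---your identity $L_R\Omega_{\mathcal G}=(d+n-1)\Omega_{\mathcal G}$ together with $\mathrm{tr}(S)=n$ gives $k=d-1$, so the type is $(1,\dots,1;d-1)$, and the paper's own proof has the matching off-by-one ($L_R\Omega=(d+n)\Omega$, $[R,\mathcal X]=d\mathcal X$) which you should not reproduce.
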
 

\begin{proof}\noindent It is easy to show that there exists a local chart $(U,x=(x_0,...,x_{n-1})\in \mathbb C^n$  around $p$ such that the lifting $\tilde f$ of $f$ is of the form $\tilde f|_{U}=(x_0,...,x_{n-1}):U \to \mathbb{C}^{n}$. In particular $\mathcal {F}|_{U(p)}$ is represented by the homogeneous $(n-2)$-form $$\Omega=(-1)^{i+k+1}\sum_{i,k} x_{k}P_{i}dx_0\wedge...\wedge \widehat{dx_i}\wedge...\wedge\widehat{dx_k}\wedge...\wedge dx_{n-1}.$$ 
Observe that $L_R\Omega=(d+n)\Omega$, $\mathcal X=\sum_{i}P_i\frac{\partial}{\partial x_i}$, $\mathcal Z=(d+n)\mathcal X$, $[R,\mathcal X]=d\mathcal X$. Since we are considering $\Omega\in \mathcal A$ we  have that $Y=R$, $N=0$ and hence we conclude that $p$ is a n.g.K singularity of $\Omega$ of type $(1,\dots,1,n)$.
In particular the vector field $S$ as in the Theorem \ref{teo5.3} is the radial vector field. \end{proof} 
It follows from theorem. \ref{stability} that this singularities are stable under deformations. 
Propositon \ref{lema5.6} says that the germ $f^\ast \mathcal G,p$ of $f^\ast \mathcal G$ at $p\in I(f)$ is equivalent to the germ of $\Pi_{n-1}^\ast(\mathcal G),0$, where $\Pi_{n-1}: \mathbb C^n\backslash \{\ 0\}\to \mathbb P^{n-1}$ is the canonical projection. Therefore in this case we can see all the foliations $\mathcal G$ in a neighborhood of each $p\in I(f)$.
\subsubsection{Deformations of the singular set of ${\mathcal F}_0= f_{0}^\ast (\mathcal G_{0})$}

 We have constructed an open and dense subset $\mathcal W$ inside {$PB(\Theta,2,n)$} containing the generic pull-back foliations.  We will show that for any rational foliation $\mathcal{F}_{0} \in \mathcal W$ and any germ of a holomorphic family of foliations $(\mathcal{F}_{t})_{t \in (\mathbb{C},0)}$ such that $\mathcal{F}_{0}=\mathcal{F}_{t=0}$ we have $\mathcal{F}_{t} \in PB(\Theta,2,n)$ for all $t \in (\mathbb{C},0).$

Using the theorem. \ref{stability} with $V=(\mathbb C,0)$, it follows that for each $p_j\in I(f_0)$ there exists a deformation $p_{j}(t)$ of $p_{j}$ and a deformation of $\mathcal F_{t,p_j(t)}$ of $\mathcal F_{p_{j}}$ such that $p_{j}(t)$  is a n.g.K singularity of $\mathcal F_{t,p_j(t)}:=\Omega_{p_j(t)}$ of type $(1,\dots,1,n)$ and $(\mathcal F_{t,p_j(t)})_{t \in (\mathbb{C},0)}$ defines a holomorphic family of foliations in $\mathbb P^{n-1}$.
   We will denote by $I(t)=\{p_{1}(t),..., p_{j}(t),...,p_{\nu^{n}}(t)\}$.  
   \begin{remark}Since $I(t)$ is not connected we can not guarantee a priori that $\mathcal F_{t,p_i(t)}=\mathcal F_{t,p_j(t)}$, if $i \neq j.$
   \end{remark}

\begin{lemma} 
There exist $\epsilon >0$ and smooth isotopies $\phi_{q_i}:D_{\epsilon}\times V_{q_i} \to \mathbb P^n,  q_i \in Sing(\mathcal G_{0})$, such that $V_{q_i}(t)=\phi_{i}(\{t\}\times V_{q_i})$ satisfies:
\begin{enumerate}
\item[(a)] $V_{q_i}(t)$ is an algebraic subvariety of dimension $1$ of $\mathbb P^n$ and  $V_{q_i}(0) = V_{q_i}$ for all $ q_i\in Sing(\mathcal G_{0})$ and for all $t \in D_{\epsilon}.$
\item[(b)] $I (t) \subset V_{q_i}(t)$ for all $q_i \in Sing(\mathcal G_{0})$ and for all $t \in D_{\epsilon} $. Moreover, if $q_i \neq q_j$, and $q_i, q_j \in Sing(\mathcal G_{0})$,  we have $V_{q_i}(t)  \cap V_{q_j}(t) = I(t)$ for all $t \in D_{\epsilon}$ and the intersection is transversal.
\item[(c)]   $V_{q_i}(t) \backslash I(t)$ is contained in the Kupka-set of $\mathcal{F}_t$ for all $q_i \in Sing(\mathcal G_{0})$ and for all $t \in D_{\epsilon}.$ In particular, the transversal type of $\mathcal F_{t}$ is constant along $V_{q_i}(t) \backslash I(t)$. 
\end{enumerate}
\end{lemma}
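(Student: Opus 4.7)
The overall strategy is to build the isotopy $\phi_{q_i}$ piecewise, using the two local stability results already in the paper: the classical Kupka--Reeb stability along the smooth part $V_{q_i}\backslash I(f_0)$, and Theorem~\ref{stability} at the indeterminacy points $p_j(0)\in I(f_0)\subset V_{q_i}$. The local pictures will then be glued together into a family of smooth analytic curves, and algebraicity of each $V_{q_i}(t)$ will be recovered at the end via Chow's theorem.

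At each Kupka point $p\in V_{q_i}\backslash I(f_0)$ the analysis of Section~\ref{section5.1} shows that $\mathcal{F}_0$ is, in suitable local coordinates, the product of the hyperbolic singularity $Q$ representing $\mathcal{G}_0$ at $q_i$ with the regular foliation given by the fibers of $f_0$; the transverse type $Q$ is hyperbolic because $\mathcal{G}_0\in A(d)$. The Kupka--Reeb theorem then implies that for $|t|$ small this product structure persists for $\mathcal{F}_t$, yielding a holomorphic germ of curve $\gamma_p(t)\subset Sing(\mathcal{F}_t)$ with $\gamma_p(0)=\{y=0\}$. Hyperbolicity being an open condition, these germs match on overlaps and assemble into a smooth deformation of the open curve $V_{q_i}\backslash I(f_0)$ for $t\in D_\epsilon$, with transverse type remaining a nearby hyperbolic germ.

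Near each $p_j\in I(f_0)\cap V_{q_i}$, Proposition~\ref{lema5.6} identifies $p_j$ as an n.g.K.\ singularity of type $(1,\dots,1,n)$ whose germ is equivalent to $(\Pi_{n-1}^{*}\mathcal{G}_0,0)$. Theorem~\ref{stability} then yields a unique n.g.K.\ singularity $p_j(t)$ of the same type for $\mathcal{F}_t$, and the normal form of Theorem~\ref{teo5.3} shows that $(\mathcal{F}_t,p_j(t))$ is again equivalent to $(\Pi_{n-1}^{*}\mathcal{G}_t,0)$ for a holomorphic deformation $\mathcal{G}_t\in A(d)$ of $\mathcal{G}_0$. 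Since $\mathcal{G}_t$ still has $N=(d^{n}-1)/(d-1)$ hyperbolic singularities $q_i(t)$, the local pre-images by $\Pi_{n-1}$ produce $n-1$ smooth germs of analytic curves meeting transversally at $p_j(t)$, one for each $q_i(t)$. These are precisely the germs that close up the Kupka curves of the previous step, and they simultaneously yield the transverse-intersection claim $V_{q_i}(t)\cap V_{q_k}(t)=I(t)$ in~(b).

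Finally, patching the two local models gives, for each $q_i$, a compact smooth analytic curve $V_{q_i}(t)\subset\mathbb P^n$, hence algebraic by Chow's theorem; as $V_{q_i}(0)$ is the complete intersection of $n-1$ transversal hypersurfaces of degree $\nu$, the openness of the complete-intersection and transversality conditions ensures that $V_{q_i}(t)$ remains of the same type for $t\in D_\epsilon$, giving~(a), and (c) is then a restatement of the Kupka--Reeb stability already invoked. The step I expect to be the main obstacle is the gluing at $I(t)$: one must identify the $n-1$ abstract analytic branches through the n.g.K.\ point $p_j(t)$ produced by Theorem~\ref{stability} with the specific Kupka curves coming in from $V_{q_i}(t)\backslash I(t)$, and this requires using the full strength of the normal form of Theorem~\ref{teo5.3} via the local pull-back identification $(\mathcal{F}_t,p_j(t))\cong(\Pi_{n-1}^{*}\mathcal{G}_t,0)$; without this canonical identification the matching of branches at $I(t)$ could not be carried out coherently across distinct indeterminacy points.
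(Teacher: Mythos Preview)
The paper does not actually prove this lemma; it simply cites \cite[lema 2.3.3, p.83]{ln}. Your sketch is essentially the argument one expects to find there: Kupka--Reeb stability along the open part $V_{q_i}\setminus I(f_0)$, the n.g.K stability of Theorem~\ref{stability} at the indeterminacy points, compactness of $V_{q_i}$ to pass to a uniform $\epsilon$, and Chow's theorem to recover algebraicity. So at the level of strategy your proposal matches what is being cited.

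Two corrections are in order. First, at each deformed indeterminacy point $p_j(t)$ the number of smooth branches of $\mathrm{Sing}(\mathcal F_t)$ through $p_j(t)$ is $N=(d^{n}-1)/(d-1)$, one for each singularity of the transverse foliation on $\mathbb P^{n-1}$, not $n-1$; every $V_{q_i}(t)$ contributes exactly one of these branches. Second, and more substantively, the paper explicitly warns in the remark immediately following the lemma that the foliations $\mathcal F_{t,p_i(t)}$ and $\mathcal F_{t,p_j(t)}$ produced by Theorem~\ref{stability} at distinct points of $I(t)$ need not coincide. Hence the single ``holomorphic deformation $\mathcal G_t\in A(d)$'' you invoke, and the ``canonical identification'' you rely on in the last paragraph for the gluing, are not available at this stage of the argument. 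The coherent matching of branches across the various $p_j(t)$ should instead be argued directly: $\mathrm{Sing}(\mathcal F_t)$ is a single analytic subset varying continuously with $t$, so the connected component of $\mathrm{Sing}(\mathcal F_t)$ through a given branch at $p_j(t)$ is determined by continuity from the component $V_{q_i}$ at $t=0$. This is simpler than what you propose and does not require any further input from Theorem~\ref{teo5.3}.
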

\begin{proof}  See \cite[lema 2.3.3, p.83]{ln}.
 \end{proof}

\subsection{End of the proof of Theorem \ref{teob}}\label{subsection5.4} 

We divide the end of the proof of Theorem \ref{teob} in two parts. In the first part we construct a family  of rational maps $f_{t}:{\mathbb{P}^n \DashedArrow[->,densely dashed    ]   \mathbb{P}^{n-1}}$, $f_{t} \in Gen\left(n,n-1,\nu\right)$, such that $(f_{t})_{t \in D_{\epsilon}}$ is a deformation of $f_{0}$ and the subvarieties $V_{q_i}(t)$ are fibers of $f_t$ for all $t$. In the second part we show that there exists a family of foliations $(\mathcal {G}_{t})_{t\in D_{\epsilon}}, \mathcal {G}_{t} \in  \mathcal A$ (see Section \ref{section4}) such that  $\mathcal{F}_{t}= f_{t}^{ \ast}(\mathcal{G}_{t})$ for all $t\in D_{\epsilon}$. 

\subsubsection{Part 1}\label{part1} 

 Once $d=deg(\mathcal G_{0})\geq 2$, the number of singularities of $\mathcal G_{0}$ is $N=\frac{d^{n}-1}{d-1}>{n}$, so we can suppose that the singularities of $\mathcal G_0$ are $q_{1}=[0:0:...:1],...,q_{n}=[1:0:...:0],\dots,q_{N}$. 

  \begin{proposition}\label{recupmapas}
Let $(\mathcal{F}_{t})_{t \in D_{\epsilon}}$ be a deformation of $\mathcal F_{0}= f_{0}^*(\mathcal G_{0})$, where $(f_{0}, \mathcal G_{0})$ is a generic pair, with $\mathcal G_{0} \in \mathcal A$, $f_0 \in Gen\left(n,n-1,\nu\right)$ and $deg(f_{0})=\nu\geq 2$. Then there exists a deformation $({f}_{t})_{t \in D_{\epsilon}}$ of $f_{0}$ in $Gen\left(n,n-1,\nu\right)$ such that:
\begin{enumerate}
\item[(i)]$V_{q_i}(t)$ are fibers of $({f}_{t})_{t \in D_{\epsilon'}}$.
\item[(ii)] $I(t)=I(f_{t}), \forall {t \in D_{\epsilon'}}$.
    \end{enumerate}
  \end{proposition}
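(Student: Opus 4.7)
The plan is to build $f_t$ by constructing, for each $k \in \{0,\dots,n-1\}$, a holomorphic family $F_k^{(t)}$ of degree-$\nu$ polynomials that deforms $F_k^{(0)}$ and vanishes on the appropriate union of the curves $V_{q_i}(t)$. Using the ordering from the text ($q_1 = [0:\cdots:0:1], \ldots, q_n = [1:0:\cdots:0]$), the initial map satisfies $V_{q_i}(0) = \{F_k^{(0)} = 0 : k \neq n-i\}$, so each $F_k^{(0)}$ vanishes on the $n-1$ curves $V_{q_i}(0)$ with $i \in \{1,\ldots,n\} \setminus \{n-k\}$. I would set
\[ C_k(t) := \bigcup_{\substack{1 \le i \le n \\ i \neq n-k}} V_{q_i}(t) \]
and, once the $F_k^{(t)}$ are produced, define $\tilde f_t := (F_0^{(t)}, \ldots, F_{n-1}^{(t)})$.

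The central step, and the one I expect to be the main obstacle, is the existence of a holomorphic family $F_k^{(t)} \in H^0(\mathbb{P}^n, \mathcal{I}_{C_k(t)}(\nu))$ with initial term $F_k^{(0)}$. I would approach this via deformation theory: the smooth isotopies from the preceding lemma make $\{C_k(t)\}_{t \in D_{\epsilon'}}$ into a flat family of $1$-dimensional subschemes of $\mathbb{P}^n$, so the direct image along the projection to $D_{\epsilon'}$ of the ideal sheaf twisted by $\mathcal{O}(\nu)$ is coherent. After shrinking $\epsilon'$, the task reduces to showing that this coherent sheaf is free with fibre dimension constantly equal to $h^0(\mathbb{P}^n, \mathcal{I}_{C_k(0)}(\nu))$, which by cohomology and base change follows from a vanishing statement for $H^1(\mathbb{P}^n, \mathcal{I}_{C_k(t)}(\nu))$; the nonzero element $F_k^{(0)}$ then lifts to the required holomorphic section $F_k^{(t)}$.

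Assuming this extension is in hand, verifying (i) and (ii) should be routine. For (ii), since $f_0$ is generic, the $n$ hypersurfaces $\{F_k^{(0)}=0\}$ meet transversally at the $\nu^n$ points of $I(f_0)$, and openness of this condition shows $f_t \in Gen(n,n-1,\nu)$ for small $t$; in particular $\#I(f_t) = \nu^n$. The containment $I(t)\subseteq V_{q_i}(t)\subseteq\{F_k^{(t)}=0 : k\neq n-i\}$ for every $i \in \{1,\ldots,n\}$ implies $I(t) \subseteq I(f_t)$, and equality follows by cardinality. For (i), $V_{q_i}(t)$ is contained in the complete intersection $f_t^{-1}(q_i) = \{F_k^{(t)}=0 : k \neq n-i\}$, a curve of degree $\nu^{n-1}$ by Bezout; since the two agree at $t=0$ and vary continuously, they coincide for small $t$, so each $V_{q_i}(t)$ is a fibre of $f_t$.
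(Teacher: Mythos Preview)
Your route is genuinely different from the paper's, and the difference matters because your central step---producing $F_k^{(t)}\in H^0(\mathbb P^n,\mathcal I_{C_k(t)}(\nu))$ deforming $F_k^{(0)}$---has a real gap. You reduce this to the vanishing of $H^1(\mathbb P^n,\mathcal I_{C_k(t)}(\nu))$, but you do not prove it, and it is not clear. Upper semicontinuity only gives $h^0(\mathcal I_{C_k(t)}(\nu))\le h^0(\mathcal I_{C_k(0)}(\nu))=1$; nothing you have written prevents $h^0$ from dropping to $0$ for $t\neq 0$. The union $C_k(t)$ is a curve of degree $(n-1)\nu^{n-1}$ with $\nu^n$ singular points, and there is no general reason such a curve lies on a single degree-$\nu$ hypersurface. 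Without this, you have no $F_k^{(t)}$ and hence no $f_t$.

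The paper sidesteps this obstacle entirely by working with \emph{individual} curves rather than their union. It invokes Sernesi's result that a smooth complete intersection deforms to a complete intersection of the same type, so each $V_{q_i}(t)$ separately admits $n-1$ degree-$\nu$ generators. The generators coming from $V_{q_1}(t)$ already furnish $F_1(t),\dots,F_{n-1}(t)$; one more generator $\tilde F_0(t)$ from $V_{q_2}(t)$ completes the candidate map $f_t$. The remaining work is to show the other generators (and the other curves $V_{q_k}(t)$) are compatible with this choice, and here the paper uses a Noether-type lemma: any homogeneous $G$ vanishing on a transverse complete intersection $(F_0(t)=\cdots=F_{n-1}(t)=0)$ lies in the ideal $\langle F_0(t),\dots,F_{n-1}(t)\rangle$. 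Combined with the cardinality count $\#I(t)=\#I(f_t)=\nu^n$, this forces every generator of every $V_{q_k}(t)$ to be a constant linear combination of $F_0(t),\dots,F_{n-1}(t)$, so $V_{q_k}(t)$ is a fibre of $f_t$. Note in particular that the paper builds $f_t$ from only \emph{two} of the curves, whereas your construction requires all $n$ simultaneously; this is precisely what lets the paper avoid the cohomological obstruction you encountered. Your verifications of (i) and (ii), once $f_t$ exists, are essentially the same as the paper's.
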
  
\begin{proof} Let $\tilde f_{0}=(F_0,...,F_{n-1}): \mathbb C^{n+1} \to\mathbb C^{n}$ be the homogeneous expression of $f_{0}$. Then $V_{q_1}$, $V_{q_2}$ ,..., and $V_{q_n}$ appear as the complete intersections $V_{q_i}=(F_{0}=F_{1}=...=\widehat{F_{i-1}}=...=0)$. The remaining fibers, $V_{q_i}$ for $i>n$  are obtained in the same way.  With this convention we have that $I(f_0)=V_{q_i}\cap V_{q_j}$ if $i\neq j$. It follows from \cite{Ser0} (see section 4.6 pp 235-236) that each $V_{q_i}(t)$ is also a smooth complete intersection generated by polynomials of the same degree. However we can not assure that the set of polynomials which define each $V_{q_i}(t)$ have a correlation among them. In the next lines we will show this fact. For this let us work firstly with two curves. For instance, let us take $V_{q_1}(t)$ and $V_{q_2}(t)$ which are deformations of $V_{q_1}$ and $V_{q_2}$ respectively.  We will see that this two curves are enough to construct the family of deformations $({f}_{t})_{t \in D_{\epsilon'}}$.  After that we will show that the remaining curves $V_{q_i}(t)$ are also fibers of $({f}_{t})_{t \in D_{\epsilon'}}$. We can write $V_{q_1}(t)=(F_{1}(t)=F_{2}(t)=...F_{n-1}(t)=0)$, and  $V_{q_2}(t)=(\tilde F_{0}(t)={F_{1}(t)}=\tilde F_{2}(t)...=...\tilde F_{n-1}(t)=0)$ where $(F_{i}(t))_{t \in D_{\epsilon'}}$  and  $(\tilde{F}_{i}(t))_{t \in D_{\epsilon'}}$ are deformations of $F_{i}$ and $ D_{\epsilon'}$ is a possibly smaller neighborhood of $0$. Observe first that since the $F_{is}(t)$ and $\tilde F_{is}(t)$  are near $F_{is}$, they meet as a complete intersection at:
$$I(f_t):=(F_{0}(t)=0)\cap V_1(t)$$
On the other hand we also have 
$$I(t)=V_{q_1}(t)\cap V_{q_2}(t)=V_{q_1}(t)\cap[(F_{0}(t)=0)\bigcap \{\tilde F_{2}(t)=\dots\tilde F_{n-1}(t)=0\}].$$
Let us write $\{S(t)=0\}=\{\tilde F_{2}(t)=\dots\tilde F_{n-1}(t)=0\}$. Hence $I(f_t) \cap \{S(t)=0\}=V_{q_1}(t)\cap V_{q_2}(t)=I(t)$, which implies that $I(t)\subset I(f_t).$  Since $I(f_t)$ and $I(t)$ have $\nu^{n}$ points, we have that  $I(t)=I(f_t)$ for all $t \in D_{\epsilon'}$.  In particular, we obtain that $I(t)\subset \{S(t)=0\}$.  
We will use the following version of Noether's Normalization Theorem (see \cite{ln} p 86):

\begin{lemma} (Noether's Theorem) Let $G_{0},...,G_{k}  \in \mathbb {C}[z_{1},...,z_{m}]$ be homogeneous polynomials where $0 \leq k \leq m$ and $m \geq 2$, and $X=(G_{0}=...=G_{k}=0)$. Suppose that the set $Y:=\{p \in X | dG_{0}(p) \wedge...\wedge dG_{k}(p)=0\}$ is either $0$ or $\emptyset$. If $G  \in \mathbb {C}[z_{1},...,z_{m}]$ satisfies $G|_{X} \equiv 0$, then  $G$  $\in$  $<G_{0},...,G_{k}>$. 
\end{lemma} 
\noindent Take $k=n-1$, $G_{0}=F_{0}(t)$, $G_{1}=F_{1}(t)\dots G_{n-1}=F_{n-1}(t)$. Using Noether's Theorem with $Y=0$ and the fact that all polynomials involved are homogeneous of the same degree, we have $\tilde F_{i}(t)$ $\in$ $<F_{0}(t),F_{1}(t),\dots,F_{n-1}(t)>$. More precisely we conclude that each $\tilde F_{i}(t)=\sum _{j=0}^{n-1}g_{i,j}(t)F_{j}(t)$, $g_{i,j}(t)\in\mathbb C$ and when $t=0$ for each $i$, $\tilde F_{i}(0)=F_{i}(0)=F_{i}$. On the other hand, if $V_{q_k}(t)$ is the deformation of another $V_{q_k}$,  then $V_{q_k}(t)$ is also a complete intersection , say, $V_{q_k}(t)=(P_1^{k}(t)=\dots=P_{n-1}^{k})=0$ where each $P_i^{k}(t)$, for $ i=1,\dots,{n-1}$ is a homogeneous polynomial of degree $\nu$.
Since $I(t)\subset(P_i^{k}(t)=0)$ for  $ i=1,\dots,{n-1}$, we have that each  $P_i^{j}(t)$ is a linear combination of the $F_{is}(t)$, that is, $P_i^{k}(t)$ $\in$ $<F_{0}(t),F_{1}(t),\dots,F_{n-1}(t)>$. This implies that $V_{q_k}(t)$ is also a fiber of $f_t$, as the reader can check, say $V_{q_k}(t)=f_t^{-1}(q_k(t))$. Since $q_{k}(t)=f_{t}(V_{q_k}(t))$ and $f_t$ and $V_{q_k}(t)$ are deformations of $f_0$ and $V_{q_k}$ we get that  $q_{k}(t)$ is a deformation of $q_{k}$, so that for small $t$, $q_{k}(t)$ is a regular value of  $f_t$.
\end{proof}
\subsubsection{Part 2} Let us now define a family of foliations $(\mathcal {G}_{t})_{t\in D_{\epsilon}}, \mathcal {G}_{t} \in  \mathcal A$ (see Section \ref{section4}) such that  $\mathcal{F}_{t}= f_{t}^{ \ast}(\mathcal{G}_{t})$ for all $t\in D_{\epsilon}$.   Let $M(t)$ be the family of ``rational varieties" obtained from $\mathbb P^n$ by blowing-up at the $\nu^{n}$ points $p_{1}(t),..., p_{j}(t),...,p_{\nu^{n}}(t)$ corresponding to $I(t)$ of $\mathcal F_{t}$; and denote by $$\pi(t):M(t) \to \mathbb P^n$$ the blowing-up map. The exceptional divisor of $\pi(t)$ consists of $\nu^{n}$ submanifolds $E_{j}(t)=\pi(t)^{-1}(p_{j}(t)),$ $1\leq j \leq \nu^{n}$, which are projective spaces $\mathbb P^{n-1}$. 
More precisely, if we blow-up $\mathcal{F}_{t}$ at the point $p_{j}(t)$, then the restriction of the strict transform $\pi^{\ast}\mathcal{F}_{t}$ to the exceptional divisor $E_{j}(t)=\mathbb P^{n-1}$ is up to a linear automorphism of  $\mathbb P^{n-1}$, the homogeneous $(n-2)$-form that defines $\mathcal{F}_{t}$ at the point $p_{j}(t)$.  With this process we produce a family of bidimensional holomorphic foliations in $\mathcal A$. This family is the ``holomorphic path'' of candidates to be a deformation of $\mathcal G_{0}$. In fact, since $\mathcal A$ is an open set we can suppose that this family is inside $\mathcal A$. 
We fix the exceptional divisor $E_1(t)$ to work with and we denote by $\mathcal{G}_t$ the restriction of $\pi^*\mathcal{F}_t$ to $E_1(t)$.  Consider the family of mappings ${f}_{t}:\mathbb P^{n} \DashedArrow[->,densely dashed    ]\mathbb P^{n-1}$, ${t \in D_{\epsilon'}}$ defined in Proposition \ref{recupmapas}. We will consider the family $(f_{t})_{t\in D_{\epsilon}}$ as a family of rational maps  $f_{t}:\mathbb P^n  \DashedArrow[->,densely dashed    ] E_{1}(t)$; we decrease $\epsilon$ if necessary. We we would like to observe that the mapping $f_{t}\circ\pi(t):M(t)\backslash \cup_{j} E_{j}(t)\to\mathbb P^{n-1}$ extends as holomorphic mapping  $\hat{f}_{t}:M(t) \to \mathbb P^{n-1}$ if $|t|<\epsilon$. This follows from the fact that $dF_{0}(t)\left(p_{j}(t)\right)\wedge dF_{1}(t)\left(p_{j}(t)\right)\wedge...\wedge dF_{n-1}(t)\left(p_{j}(t)\right) \neq 0$, $1\leq j\leq\nu^n$, if $|t|<\epsilon$. The mapping  $f_t$ can be interpreted as follows. 
Each fiber of $f_t$ meets $p_{j}(t)$ once, which implies that each fiber of $\hat{f}_{t}$ cuts $E_{1}(t)$ only one time.  Since $M(t) \backslash \cup_{j} E_{j}(t)$ is biholomorphic to $\mathbb P^n \backslash I(t)$, after identifying $E_{1}(t)$ with $\mathbb P^{n-1}$, we can imagine that if $q \in M(t) \backslash \cup_{j} E_{j}(t)$ then $\hat{f}_{t}(q)$ is the intersection point of the fiber $\hat{f}_{t}^{-1}(\hat{f}_{t}(q))$ with $E_{1}(t)$. We obtain a mapping $$\hat{f}_{t}:M(t) \to \mathbb P^{n-1}.$$ With all these ingredients we can define the foliation $\tilde{\mathcal{F}}_{t}={f}_{t}^\ast(\mathcal G_{t}) \in PB(\Theta,2,n)$. This foliation is a deformation of $\mathcal{F}_0$.  Based on the previous discussion let us denote ${\mathcal{F}_{1}}({t})= \pi(t)^{\ast} ({\mathcal{F}}_{t})$ and $\hat{\mathcal{F}_{1}}({t})= \pi(t)^{\ast} (\tilde{\mathcal{F}}_{t})$. \begin{lemma}If ${\mathcal{F}_{1}}({t})$ and $\hat{\mathcal{F}_{1}}({t})$ are the foliations defined previously, we have that $${\mathcal{F}_{1}}({t})|_{{E_{1}(t)}\simeq \mathbb P^{n-1}}={\hat {\mathcal G}_{t}}=\hat{\mathcal{F}_{1}}({t})|_{{E_{1}(t)}\simeq \mathbb P^{n-1}}$$ where ${\hat {\mathcal G}_{t}}$ is the foliation induced on ${E_{1}(t)}\simeq \mathbb P^{n-1}$ by the homogeneous $(n-2)$-form $\Omega_{p_1(t)}$. 
\end{lemma}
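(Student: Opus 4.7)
The plan is to prove the two equalities separately, each reducing to an explicit local picture at the indeterminacy point $p_1(t) \in I(f_t)$. The common tool will be Proposition \ref{lema5.6} combined with the stability Theorem \ref{stability}: both $\mathcal F_t$ and $\tilde{\mathcal F}_t = f_t^*(\mathcal G_t)$ have, at $p_1(t)$, a n.g.K singularity of type $(1,\dots,1,n)$ whose semisimple part is the radial vector field, so each is locally defined by a homogeneous $(n-2)$-form, and one knows explicitly what such a form produces on the exceptional divisor of a blow-up at the singular point.

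For the first equality $\mathcal F_1(t)|_{E_1(t)} = \hat{\mathcal G}_t$, I would argue as follows. By Proposition \ref{lema5.6} there are local coordinates $x = (x_0,\dots,x_{n-1})$ around $p_1(t)$ in which $\mathcal F_t$ is represented by the homogeneous form $\Omega_{p_1(t)}$. In the standard blow-up chart, the exceptional divisor $E_1(t)$ is identified with $\mathbb P^{n-1}$, and the restriction of the strict transform $\pi(t)^*\Omega_{p_1(t)}$ to $E_1(t)$ is, up to a linear automorphism of $\mathbb P^{n-1}$, the foliation on $\mathbb P^{n-1}$ defined by $\Omega_{p_1(t)}$ itself, namely $\hat{\mathcal G}_t$. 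Since $\mathcal G_t$ was defined in Subsection \ref{subsection5.4} precisely as $\pi(t)^*\mathcal F_t|_{E_1(t)}$, this yields $\mathcal F_1(t)|_{E_1(t)} = \mathcal G_t = \hat{\mathcal G}_t$.

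For the second equality $\hat{\mathcal F}_1(t)|_{E_1(t)} = \hat{\mathcal G}_t$, recall that $\tilde{\mathcal F}_t = f_t^*(\mathcal G_t)$. The composition $f_t\circ\pi(t)$ extends to a holomorphic map $\hat f_t : M(t) \to \mathbb P^{n-1}$, hence $\pi(t)^*\tilde{\mathcal F}_t = \hat f_t^*\mathcal G_t$ and therefore $\hat{\mathcal F}_1(t) = \hat f_t^*\mathcal G_t$. Under the identification $E_1(t)\simeq \mathbb P^{n-1}$ used in Subsection \ref{subsection5.4} --- where each point of $E_1(t)$ is sent by $\hat f_t$ to the unique intersection of its fiber with $E_1(t)$ --- the restriction $\hat f_t|_{E_1(t)}$ is the identity. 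This gives at once $\hat{\mathcal F}_1(t)|_{E_1(t)} = (\hat f_t|_{E_1(t)})^*\mathcal G_t = \mathcal G_t = \hat{\mathcal G}_t$.

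The main obstacle is to check carefully that $\hat f_t|_{E_1(t)}$ is indeed the identity (or more precisely, the same linear automorphism of $\mathbb P^{n-1}$ that is absorbed into the identification used to define $\mathcal G_t$ in the previous step). This reduces to a local computation in a blow-up chart at $p_1(t)$: since $f_t$ is generic, the differentials $dF_0(t)(p_1(t)),\dots,dF_{n-1}(t)(p_1(t))$ span the cotangent space at $p_1(t)$, and in the corresponding linear coordinates the map $\hat f_t$ on $E_1(t)$ is exactly the projectivization of this basis. The same linear normalization governs the identification used to define $\hat{\mathcal G}_t$ out of $\Omega_{p_1(t)}$ in the first part, so both restrictions produce the same foliation on $E_1(t)$, completing the proof.
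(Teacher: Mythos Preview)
Your proof is correct and follows essentially the same approach as the paper: the first equality comes from the local homogeneous representative $\Omega_{p_1(t)}$ (which satisfies $i_R\Omega_{p_1(t)}=0$ and hence descends to $E_1(t)\simeq\mathbb P^{n-1}$), and the second from the geometric description of $\hat f_t$. The paper's own proof is a terse two-sentence sketch of exactly these two points; you have simply unpacked the ``geometrical interpretation'' of $\hat f_t$ by making explicit that $\hat f_t|_{E_1(t)}$ is the identity under the chosen identification, which the paper leaves implicit.
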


\begin{proof} In a neighborhood of $p_1(t) \in I(t)$, ${\mathcal{F}}_{t}$ is represented by the homogeneous $(n-2)$-form $\Omega_{p_1(t)}$. This $(n-2)$-form satisfies $i_{R(t)}\Omega_{p_1(t)}=0$ and therefore naturally defines a foliation on $\mathbb P^{n-1}$. This proves the first equality. The second equality follows from the geometrical interpretation of the mapping $\hat{f}_{t}:M(t) \to  \mathbb P^{n-1}$, since $\hat{\mathcal{F}_{1}}({t})=f_{1}(t)^{*}(\mathcal {G}_{t})$. \end{proof}

Let ${q}_{1}(t)$ be a singularity of $\mathcal G_{t}$. Since the map $t \to {q}_{1}(t) \in \mathbb P^{n-1}$ is holomorphic, there exists a holomorphic family of automorphisms of $\mathbb P^{n-1}$, $t \to H(t)$ such that  ${q}_{1}(t)=[0:\dots:1]$ $\in {E_{1}(t)}$ is kept fixed. Observe that such a singularity has $(n-1)$ non algebraic separatrices at this point. Fix a local analytic coordinate system $(U_t=u_0(t),\dots,u_{n-1}(t))$ at ${q}_{1}(t)$ such that the local separatrices are tangentes to  $(u_i(t)=0)$ for each $i$. Observe that the local smooth hypersurfaces along $\hat V_{q_1(t)}=\hat{f}_{t}^{-1}({q}_{1}(t))$ defined by $\hat U_i(t):=(u_i(t)\circ\hat{f}_{t}=0)$ are invariant for $\hat{\mathcal{F}_{1}}({t})$. Furthermore, they meet transversely along  $\hat V_{q_1(t)}$. On the other hand, $\hat V_{q_1(t)}$ is also contained in the Kupka set of ${\mathcal{F}_{1}}({t})$.  Therefore there are $(n-1)$ local smooth hypersurfaces $U_i(t):=(u_i(t)\circ\hat{f}_{t}=0)$ invariant for ${\mathcal{F}_{1}}({t})$ such that:
\begin{enumerate}
\item All the $U_i(t)$ meet transversely along $\hat V_{q_1(t)}$.
\item$U_i(t)\cap\pi(t)^{-1}(p_{1}(t))=(U_i(t)=0)=\hat U_i(t)\cap\pi(t)^{-1}(p_{1}(t))$ (because ${\mathcal{F}_{1}}({t})$ and $\hat{\mathcal{F}_{1}}({t})$) coincide on ${{E_{1}(t)}\simeq \mathbb P^{n-1}}$).
\item Each $U_i(t)$ is a  deformation of $U_i(0)=\hat U_i(0)$. 
\end{enumerate}

Choosing $i=0$ we shall prove that $U_0(t)=\hat U_0(t)$ for small $t$. For our analysis this will be sufficient to finish the proof of Theorem A.

\begin{lemma}\label{lemafund}$U_0(t)=\hat U_0(t)$ for small $t$.
\end{lemma}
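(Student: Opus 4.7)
The plan is to reduce the desired equality $U_0(t)=\hat U_0(t)$ to a coincidence of germs near the distinguished intersection point $\tilde q_1(t):=\hat V_{q_1(t)}\cap E_1(t)$, and then extend globally along $\hat V_{q_1(t)}$ by analytic continuation.

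First I collect the local inputs. Properties (1)--(3) stated before the lemma, together with the previous restriction lemma, show that near $\tilde q_1(t)$ the traces $U_0(t)\cap E_1(t)$ and $\hat U_0(t)\cap E_1(t)$ both equal the germ of the separatrix $(u_0(t)=0)$ of $\hat{\mathcal G}_t$ at $\tilde q_1(t)$; in particular the two smooth hypersurfaces $U_0(t),\hat U_0(t)$ share the same tangent hyperplane at $\tilde q_1(t)$. By Proposition \ref{lema5.6} and the stability theorem \ref{stability}, the point $p_1(t)\in I(f_t)$ is a n.g.K.\ singularity of $\mathcal F_t$ of type $(1,\dots,1,n)$, so Theorem \ref{teo5.3} furnishes local analytic coordinates at $p_1(t)$ in which $\mathcal F_t$ is literally the pull-back of a one-dimensional foliation by the canonical projection $\Pi_{n-1}\colon\mathbb C^n\setminus\{0\}\to\mathbb P^{n-1}$. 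After blowing up at $p_1(t)$, these coordinates yield a holomorphic submersion $p_t$ from a neighborhood of $E_1(t)$ in $M(t)$ onto $E_1(t)\cong\mathbb P^{n-1}$ such that $\mathcal F_1(t)=p_t^{\ast}\hat{\mathcal G}_t$; by construction $\hat{\mathcal F}_1(t)=\hat f_t^{\ast}\hat{\mathcal G}_t$ in the same neighborhood.

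With these two local pull-back presentations in hand, the $n-1$ local smooth invariant hypersurfaces through $\hat V_{q_1(t)}$ near $\tilde q_1(t)$ correspond, for each foliation, bijectively to the $n-1$ separatrices of the hyperbolic singularity $\hat{\mathcal G}_t$ at $\tilde q_1(t)$ via the respective submersions. Since both $p_t$ and $\hat f_t$ restrict to the canonical identification $E_1(t)\cong\mathbb P^{n-1}$ on the exceptional divisor, the bijection is in both cases given by "take the trace on $E_1(t)$". The matching of traces already established then forces $U_0(t)$ and $\hat U_0(t)$ to be the preimages of the same separatrix $(u_0(t)=0)$, so they coincide as germs at $\tilde q_1(t)$. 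To upgrade this to equality of the full hypersurfaces I would exploit the Kupka local product structure along $\hat V_{q_1(t)}\setminus E_1(t)$: the germs of $U_0(t)$ and $\hat U_0(t)$ propagate uniquely along that connected curve, and since they coincide on an open neighborhood of $\tilde q_1(t)$, they must coincide as germs along all of $\hat V_{q_1(t)}$. The identity principle for smooth irreducible analytic hypersurfaces then yields $U_0(t)=\hat U_0(t)$ for small $t$.

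The main obstacle is extracting a genuine local pull-back presentation $\mathcal F_1(t)=p_t^{\ast}\hat{\mathcal G}_t$ in a neighborhood of $E_1(t)$ that depends holomorphically on $t$. This requires showing that the nilpotent part $N$ in the n.g.K.\ normal form for type $(1,\dots,1,n)$ actually vanishes — so that the normal form is truly a submersive pull-back and not merely an affine-group orbit structure — for which I would invoke the analysis of Remark \ref{remark sobre o N ser nulo}, and then tracking the normalizing chart holomorphically in $t$ via the uniqueness part of Theorem \ref{stability}.
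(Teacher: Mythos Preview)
Your argument has a genuine gap at the step where you conclude that $U_0(t)$ and $\hat U_0(t)$ coincide as germs at $\tilde q_1(t)$. What you have actually established is only that $U_0(t)=p_t^{-1}(u_0(t)=0)$ and $\hat U_0(t)=\hat f_t^{-1}(u_0(t)=0)$ are preimages of the \emph{same} separatrix under two submersions $p_t$ and $\hat f_t$ that both restrict to the identity on $E_1(t)$. But two submersions onto $E_1(t)$ that agree on $E_1(t)$ need not have the same fibres; for instance, in coordinates $(x,y)$ with $E=(y=0)$, the maps $x$ and $x+y^2$ agree on $E$ but have distinct level sets through the origin. So the inference ``same trace on $E_1(t)$ $\Rightarrow$ same germ'' is exactly property~(2) listed before the lemma, and it does not upgrade to equality of germs without knowing that $p_t$ and $\hat f_t$ have the same fibres---which is essentially the statement $\mathcal F_t=f_t^{\ast}\mathcal G_t$ you are trying to prove. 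Your proposed analytic-continuation step along $\hat V_{q_1(t)}$ would be fine once the germs agree somewhere, but you never get that seed.

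The paper's proof follows a completely different, global route that avoids any comparison of the two submersions. It slices a neighborhood of $\hat V_{q_1(t)}$ by the level sets $H_\epsilon=(u_1(t)\circ\hat f_t=\epsilon)$. Since $U_0(t)$ is a small deformation of $\hat U_0(t)=\hat f_t^{-1}(u_0(t)=0)$, the intersections $\Sigma_\epsilon=U_0(t)\cap H_\epsilon$ are \emph{compact} curves close to the fibre $\hat V_{q_1(t)}$. The image $\hat f_t(\Sigma_\epsilon)$ is then a compact analytic curve contained in a small ball of $\mathbb P^{n-1}$, hence a point by the maximum principle. It follows that $U_0(t)$ is saturated by $\hat f_t$-fibres, i.e.\ $U_0(t)=\hat f_t^{-1}(C)$ for some curve $C$; the trace on $E_1(t)$ then identifies $C$ with the separatrix $(u_0(t)=0)$, giving $U_0(t)=\hat U_0(t)$. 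The key idea you are missing is this compactness/maximum-principle step, which replaces any need for a local normal-form submersion $p_t$ and the delicate question of whether $N=0$ persists under deformation.
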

\begin{proof} Let us consider the projection $\hat{f}_{t}:M(t) \to  \mathbb P^{n-1}$ on a neighborhood of the regular fibre  $\hat V_{q_1(t)}$, and fix local coordinates $(U_t=u_0(t),\dots,u_{n-1}(t))$ on $\mathbb P^{n-1}$ such that $U_1(t):=(u_1(t)\circ\hat{f}_{t}=0)$.  For small $\epsilon$, let $H_{\epsilon}=(u_1(t)\circ\hat{f}_{t}=\epsilon)$. Thus $\hat\Sigma_\epsilon=\hat U_0(t)\cap H_\epsilon$ are (vertical) compact curves, deformations of $\hat\Sigma_0=\hat V_{q_1(t)}$. Set $\Sigma_\epsilon=U_0(t)\cap\hat H_\epsilon$. The $\Sigma_\epsilon's$, as the $\hat\Sigma_\epsilon's$, are compact curves (for $t$ and $\epsilon$ small), since $U_0(t)$ and $\hat U_0(t)$ are both deformations of the same $U_0$. Thus for small $t$, $U_0(t)$ is close to $\hat U_0(t)$. It follows that $\hat{f}_{t}(\Sigma_\epsilon)$ is an analytic curve contained in a small neighborhood $\tilde U_t$ of ${q_1(t)}$, for small $\epsilon$. By the maximum principle, we must have that $\hat{f}_{t}(\Sigma_\epsilon)$ is a point, so that  $\hat{f}_{t}(U_0(t))=\hat{f}_{t}(\cup_{\epsilon}\Sigma_\epsilon)$ is a curve $C\subset \tilde U_t$, that is, $U_0(t)=\hat{f}_{t}^{-1}(C)$. But $U_0(t)$ and $\hat U_0(t)$ intersect the exceptional divisor ${{E_{1}(t)}\simeq \mathbb P^{n-1}}$ along the separatrix $(u_0(t)=0)$ of $\mathcal G_t$ through ${q_1(t)}$. This implies that $U_0(t)=\hat{f}_{t}^{-1}(C)=\hat{f}_{t}^{-1}(u_0(t)=0)=\hat U_0(t)$.
\end{proof}

\noindent We have proved that the foliations $\mathcal F_t$ and $\tilde{\mathcal F_t}$ have a common local leaf: the leaf that contains $\pi(t)\left(U_0(t)\backslash\hat V_{q_1(t)}\right)$ which is not algebraic. Let $D(t):=Tang(\mathcal{F}(t),\hat {\mathcal{F}}(t))$ be the set of tangencies between $\mathcal{F}(t)$ and $\hat {\mathcal{F}}(t)$. This set can be defined by $D(t)=\{ Z \in \mathbb {C}^{n+1}; \Omega(t) \wedge\hat {\Omega}(t)=0\},$ where  $\Omega(t)$ and  $\hat{\Omega}(t)$ define $\mathcal{F}(t)$ and $\hat{\mathcal{F}}(t)$, respectively. Hence it is an algebraic set. Since this set contains an immersed non-algebraic surface $U_0(t)$, we necessarily have that  $D(t)=\mathbb P^{n}.$ It follows that ${\mathcal{F}}_{t}=\tilde{\mathcal{F}}_{t}$. \\

\noindent Recall from Definition \ref{generic} the concept of a generic map. Let $f  \in RM(\left(n,n-1,\nu\right)$, $I(f)$ its indeterminacy locus and $\mathcal F$ a foliation by complex surfaces on $\mathbb P^n$, $n\geq4$. Consider the following properties:

\begin{center}
\begin{minipage}{10cm}
$\mathcal{P}_1:$ Any point $p_{j}\in I(f)$ $\mathcal F$ has the following local structure: there exists an analytic coordinate system $(U^{p_{j}},x^{p_{j}})$ around $p_{j}$ such that $x^{p_{j}}(p_{j})=0 \in ({\mathbb C^n,0})$ and $\mathcal F|_{(U^{p_{j}},x^{p_{j}})}$ can be represented by a homogeneous $(n-2)$-form $\Omega_{p_j}$ (as described in the Lemma \ref{lema5.6}) such that
\begin{enumerate} 
\item[(a)] $Sing(\mathcal Z_{p_j}) = {0}$, where $\mathcal Z_{p_j}$ is the rotational of $\Omega_{p_j}$.
\item[(b)] $0$ is a n.g.K singularity of the type $(1,\dots,1,n)$
\end{enumerate}

\end{minipage}
\end{center}

\begin{center}
\begin{minipage}{10cm}
$\mathcal{P}_2:$ There exists a fibre $f^{-1}(q)=V(q)$ such that $V(q)=f^{-1}(q)\backslash I(f)$ is contained in the Kupka-Set of $\mathcal F$.
\end{minipage}
\end{center}

\begin{center}
\begin{minipage}{10cm}
$\mathcal{P}_3:$ $V(q)$ has transversal type $Q$, where $Q$ is a germ of vector field on $({\mathbb C^{n-1},0})$ with at least a non algebraic separatrix and such that the Camacho-Sad index of $\mathcal G$ with respect to this separatrix is non-real. 
\end{minipage}
\end{center}

\begin{center}
\begin{minipage}{10cm}
$\mathcal{P}_4:$ $\mathcal F$ has no algebraic hypersurface.
\end{minipage}
\end{center}

Lemma \ref{lemafund} allows us to prove the following result:

\begin{main}\label{teoc}
In the conditions above, if properties $\mathcal{P}_1$, $\mathcal{P}_2$, $\mathcal{P}_3$ and $\mathcal{P}_4$ hold then $\mathcal F$ is a pull back foliation, $\mathcal {F}= f^{*}(\mathcal{G})$, where $\mathcal{G}$ is a $1$-dimensional foliation of degree $d\geq2$ on $\mathbb P^{n-1}$.  
\end{main}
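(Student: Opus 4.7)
My plan for Theorem \ref{teoc} is to extract, from the local data at one indeterminacy point, a candidate foliation $\mathcal G$ on $\mathbb P^{n-1}$, form $\tilde{\mathcal F}:=f^\ast(\mathcal G)$, and then prove $\mathcal F=\tilde{\mathcal F}$ by exhibiting a common non-algebraic invariant hypersurface which, together with the algebraicity of the tangency locus, forces coincidence everywhere.

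First, I would fix a point $p_1\in I(f)$ and apply $\mathcal P_1$. In the chart $(U^{p_1},x^{p_1})$, after composing $f$ with a suitable linear automorphism of $\mathbb P^{n-1}$, one can arrange that $\tilde f|_{U^{p_1}}$ is the standard projection $(x_0^{p_1},\dots,x_{n-1}^{p_1})$. The form $\Omega_{p_1}$ representing $\mathcal F$ on $U^{p_1}$ is then homogeneous, satisfies $i_R\Omega_{p_1}=0$, and by the n.g.K type $(1,\dots,1,n)$ is of the shape described in Proposition \ref{lema5.6}; hence it descends to a $1$-dimensional foliation $\mathcal G$ of degree $d$ on $\mathbb P^{n-1}$. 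Setting $\tilde{\mathcal F}:=f^\ast\mathcal G$, by tautology $\mathcal F$ and $\tilde{\mathcal F}$ coincide on $U^{p_1}$.

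Next, to globalize this local coincidence I would use $\mathcal P_2$ and $\mathcal P_3$ to build a common invariant hypersurface along the Kupka fibre $V(q)$. On the $\tilde{\mathcal F}$ side, the non-algebraic separatrix $\gamma$ of the transversal type $Q$ lifts to $\hat U_0:=f^{-1}(\gamma)$, an invariant smooth germ of hypersurface of $\tilde{\mathcal F}$ along $V(q)$. On the $\mathcal F$ side, the product structure at each point of the Kupka curve gives, in each transversal, a germ of separatrix conjugate to $\gamma$; the non-reality of the Camacho--Sad index of $\mathcal G$ at $\gamma$ is the standard hypothesis that kills the holonomy obstruction, so these local germs glue into a smooth $\mathcal F$-invariant germ of hypersurface $U_0$ containing $V(q)$. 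The argument of Lemma \ref{lemafund} then applies with the parameter $t$ removed: using coordinates $(u_0,\dots,u_{n-1})$ at $q\in \mathbb P^{n-1}$, slicing by $H_\epsilon=(u_1\circ f=\epsilon)$ and applying the maximum principle to the vertical compact curves $\Sigma_\epsilon=U_0\cap H_\epsilon$, one obtains $f(U_0)=\gamma$ and hence $U_0=\hat U_0$.

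To conclude, by $\mathcal P_4$ the hypersurface $U_0=f^{-1}(\gamma)$ cannot be algebraic, for otherwise $\gamma=f(U_0)$ would be algebraic, contradicting $\mathcal P_3$. Therefore the algebraic tangency locus
\[
D:=\bigl\{Z\in\mathbb C^{n+1}:\Omega\wedge\tilde\Omega=0\bigr\}
\]
contains the non-algebraic analytic hypersurface $U_0$, which forces $D=\mathbb P^n$ and thus $\mathcal F=\tilde{\mathcal F}=f^\ast\mathcal G$. The main obstacle I foresee is the extension step above, namely promoting the local transversal separatrix $\gamma$ to a germ of $\mathcal F$-invariant hypersurface along the entire compact Kupka curve $V(q)$: this is precisely where $\mathcal P_3$'s non-real Camacho--Sad hypothesis is indispensable, since without it one cannot rule out a holonomy obstruction to the global extension of the separatrix.
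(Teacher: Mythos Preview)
Your proposal is correct and follows exactly the approach the paper intends: the paper gives no separate proof of Theorem~\ref{teoc} beyond the sentence ``Lemma~\ref{lemafund} allows us to prove the following result,'' and your argument is precisely the natural static (parameter-free) elaboration of that lemma---extract $\mathcal G$ from the n.g.K normal form at a point of $I(f)$ via $\mathcal P_1$, set $\tilde{\mathcal F}=f^\ast\mathcal G$, use $\mathcal P_2$--$\mathcal P_3$ and the Kupka product structure to produce the invariant hypersurfaces $U_0,\hat U_0$ along $V(q)$, run the maximum-principle argument of Lemma~\ref{lemafund} to get $U_0=\hat U_0$, and finish with the tangency-locus argument. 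The only cosmetic slip is in your final paragraph: the non-algebraicity of $U_0$ follows directly from $\mathcal P_4$ (or, as you argue, from $\mathcal P_3$ once $U_0=f^{-1}(\gamma)$ is known), so invoking $\mathcal P_4$ and then contradicting $\mathcal P_3$ is redundant rather than wrong.
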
 

Note that the situation $n=3$ is proved in  \cite[Th. B p. 709]{clne}. So we can think this result as $n\geq4$-dimensional generalization of \cite{clne} for bi-dimensioal foliations in $\mathbb P^{n}$
%
%%\newpage

%\section{Remarks and complements}

%\section{Appendix}
%\subsection{The pull back's foliation degree}

\bibliographystyle{amsalpha}

\end{document}